\newtheorem{thm}{Theorem}
\newtheorem{cor}{Corollary}
\newtheorem{lem}{Lemma}
\newtheorem{defn}{Definition}
\newtheorem{rem}{Remark}
\DeclareMathOperator*{\argmin}{argmin} 
\def\spacingset#1{\renewcommand{\baselinestretch}%
{#1}\small\normalsize} \spacingset{1.6}
\begin{document}

\begin{frontmatter}


\title{Kernel Machines for Current Status Data}
\maketitle
\runtitle{Kernel Machines for Current Status Data}
\thankstext{T1}{The authors were funded in part by NSF grant DMS-1407732.}

\begin{aug}
\author{\fnms{Yael} \snm{Travis-Lumer}\ead[label=e1]{travis-lumer@campus.technion.ac.il}}
and \author{\fnms{Yair}
\snm{Goldberg}\ead[label=e2]{yairgo@technion.ac.il}}

\affiliation{Technion}

\address{Yael Travis-Lumer and Yair Goldberg\\The Faculty of Industrial Engineering and Management\\ Technion, Haifa 3200003, Israel\\ \printead{e1}\\
\phantom{E-mail:\ }\printead*{e2} }

\runauthor{Travis-Lumer and Goldberg}
\end{aug}
\begin{abstract}
In survival analysis, estimating the failure time distribution is an important and difficult task, since usually the data is subject to censoring. Specifically, in this paper we consider current status data, a type of data where all of the observations are censored. The format of the data is such that the failure time is restricted
	to knowledge of whether or not the failure time exceeds a random monitoring
	time. We propose a flexible kernel machine approach for estimation of the failure time expectation as a function
	of the covariates, with current status data. In order to obtain the kernel machine decision
	function, we minimize a regularized version of the empirical risk
	with respect to a new loss function. Using finite sample bounds and novel oracle inequalities,
	we prove that the obtained estimator converges to the true
	conditional expectation for a large family of probability measures. Finally, we present a simulation
	study and an analysis of real-world data that compares the performance of the proposed approach to existing methods. We show empirically that our approach is comparable to current
	state of the art, and in some cases is even better.
\end{abstract}
\begin{keyword}
\kwd{Kernel machines}
\kwd{Oracle inequalities}
\kwd{Support vector regression}
\kwd{Survival analysis}
\kwd{Universal consistency}
\end{keyword}
\end{frontmatter}

\section{\label{sec:Introduction}Introduction}

In this paper we aim to develop a general model free method for
analyzing current status data using machine learning techniques. In
particular, we propose a kernel machine learning method
for estimation of the failure time expectation with current status
data. Kernel machines, also known as support vector machines, were originally introduced by Vapnik in the 1990's and are
firmly related to statistical learning theory \citep{vapnik_nature_1999}. Kernel machines are learning algorithms that utilize positive definite kernels \citep{hofmann_kernel_2008}. 
The choice of kernel machines for current status data is motivated by the fact
that kernel machines can be implemented easily, have fast training speed, produce
decision functions that have a strong generalization ability, and can
guarantee convergence to the optimal solution, under some weak assumptions
\citep{shivaswamy_support_2007}.

The format of current status data is such that the failure time $T$ is
restricted to knowledge of whether or not $T$ exceeds a random monitoring
time $C$. This data format is quite common and includes examples
from various fields. \citet{jewell_current_2002} mention a few examples
including: studying the distribution of the age of a child at weaning
given observation points; when conducting a partner study of HIV infection
over a number of clinic visits; and when a tumor under investigation
is occult and an animal is sacrificed at a certain time point in order
to determine presence or absence of the tumor. For instance, in the
last example, when performing carcinogenicity testing, $T$ is the time from exposure
to a carcinogen and until the presence of a tumor, and $C$ is the
time point at which the animal is sacrificed in order to determine
presence or absence of the tumor. Clearly, it is difficult to estimate
the failure time distribution since we cannot observe the failure
time $T$. These examples illustrate the importance of this topic
and the need to find advanced tools for analyzing such data.

There are several approaches for analyzing current status data. Traditional
methods include parametric models where the underlying distribution
of the survival time is assumed to be known, such as Weibull, Gamma,
and other distributions with non-negative support. Other approaches
include semiparametric models, such as the Cox proportional hazards
model, and the accelerated failure time (AFT) model \citep[see, for example, ][]{klein_survival_1997}. Several works including \citet{diamond_proportional_1986}, \citet{shiboski_statistical_1992},
\citet{jewell_current_2002} and others, have suggested the Cox proportional
hazard model for current status data, where the Cox model can be represented
as a generalized linear model with a log-log link function. Other
works, including \citet{tian_accelerated_2006}, discussed the use of
the AFT model for current status data and suggested different algorithms
for estimating the model parameters. Additional semiparametric regression models for current status data include proportional odds \citep{rossini_semiparametric_1996},  additive hazards \citep{lin_additive_1998}, additive transformations \citep{cheng_semiparametric_2011}, linear transformations \citep{sun_semiparametric_2005}, and linear regression \citep{shen_linear_2000}. Needless to say that both parametric
and semiparametric models demand stringent assumptions on the distribution
of interest which can be restrictive. For this reason, additional
estimation methods are needed.

Nonparametric methods for analyzing current status data were also investigated in the literature. Nonparametric maximum likelihood estimation (NPMLE) of the failure time distribution function is commonly used with this type of data, and relies on the PAV algorithm of \cite{ayer_empirical_1955}. \cite{burr_2002} studied nonparametric estimation of the conditional distribution function of the failure time given the covariates, based on a locally smoothed modification of the NPMLE. \citet{wang_nonparametric_2012} studied nonparametric estimation of the marginal distribution function of the failure time using the copula model approach. \cite{honda_nonparametric_2004} constructed an estimator for the regression function utilizing a modification of maximum rank correlation, and estimated the difference between the regression function at some value, to the regression function at a standard fixed point. Note that these works are not specifically intended for estimation of the conditional expectation and thus might not yield accurate estimates.

Over the past two decades, some learning algorithms for censored data have been proposed. However, most of
these algorithms cannot be applied to current status data but only
to other, more common, censored data formats. Recently, several authors
suggested the use of kernel machines, or similarly support vector machines, for survival data, including \citet{van_belle_support_2007}, \citet{khan_support_2008}, \citet{eleuteri_support_2011}, \citet{goldberg_support_2017}, \citet{shiao_svm-based_2013}, \citet{wang_support_2016}, and \citet{polsterl_efficient_2016}. These examples illustrate that initial steps in this
direction have already been taken. However, as far as we know, the
only work based on kernel machines that can also be applied to current status data
is by \citet{shivaswamy_support_2007} which has a more computational
and less theoretic nature. The authors studied the use of kernel machines for
regression problems with interval censoring and, using simulations,
showed that the method is comparable to other missing data tools.

We present a kernel machine framework for current status data.
We propose a learning method, denoted by KM-CSD, for estimation of
the failure time conditional expectation. We investigate the theoretical properties
of the KM-CSD, and in particular, prove consistency for a large family
of probability measures. In order to estimate the conditional expectation
we use a modified version of the quadratic loss, using the methodology
of  \citet{van_der_laan_locally_1998,laan_unified_2003}. Since the failure time $T$ is not
observed, our new modified loss function is based on the censoring
time $C$ and on the current status indicator. Finally, in order to
obtain the KM-CSD estimator, we minimize
a regularized version of the empirical risk with respect to our new proposed
loss. Note that the terminology decision function is used in the kernel machine context to describe the obtained estimator.

The kernel machine we present in this work may be referred to as an inverse probability weighted complete-case estimator (\citealt{laan_unified_2003}, \citealt[Chapter 6]{tsiatis_semiparametric_2007}).  It is tempting to use the tools described in these books to derive doubly-robust kernel machine estimators. In the context of estimating equations with missing data, doubly-robust estimators are typically constructed by adding an augmentation term. This term is constructed by projecting the estimating equation onto the augmentation space (see \citealt[Section 7.4, and Theorem 10.1]{tsiatis_semiparametric_2007}). However, in our kernel machine setting, the estimator is obtained as the minimizer of a weighted loss function over a reproducing kernel Hilbert space (RKHS) and thus it is not clear how meaningful it is to project the loss function on the augmentation space. It is also not trivial to add a term to the proposed regularized empirical risk minimization problem in a way that yields a convex optimization problem over an RKHS, which is essential for deriving the results presented in this paper. While doubly-robust estimators for current status data were derived in the semiparametric literature (\citealt{ANDREWS2005332}), we do not consider such estimators in this work.  To the best of our knowledge, the only work that studied doubly-robust estimators  in the context of kernel machines was done by \cite{liu2018kernel}, however this was done in the context of missing responses, and cannot be applied to our case.  


The contribution of this work includes the development of a nonparametric estimator of the conditional expectation, the development of a kernel machine framework
for current status data, the development of new oracle inequalities for censored
data, and the study of the theoretical properties and the consistency of the
KM-CSD.

The paper is organized as follows. In Section \ref{sec:Preliminaries}
we describe the formal setting of current status data and discuss
the choice of the quadratic loss for estimating the conditional expectation.
In Section \ref{sec:Kernel-Machines} we present the proposed
KM-CSD and its corresponding loss function. Section
\ref{sec:Theoretical-Results} contains the main theoretical results,
including finite sample bounds and consistency. Section \ref{sec:Simulation-Study}
contains the simulations and Section \ref{sec:Case Study} contains an analysis of real world data. Concluding remarks are presented in Section
\ref{sec:Concluding-Remarks}. The proofs appear in \nameref{sec:Appendix}. The Matlab code for both the algorithm and for
the simulations, as well as the artificially censored data from Section \ref{sec:study2}, can be found in the \nameref{sub:Supplementary-Material}.

\section{\label{sec:Preliminaries}Preliminaries}

In this section we present the notation used throughout the paper.
First we describe the data setting and then we discuss briefly loss
functions and risks.

Assume that the data consists of $n$ independent and identically distributed random triplets $D=\{(Z_{1},C_{1},\Delta_{1}),\\
\ldots,\allowbreak (Z_{n},C_{n},\Delta_{n})\}$.
The random vector $Z$ is a vector of covariates that takes its values
in a compact set $\mathcal{Z}\subset\mathbb{\mathbb{R}}^{d}$. The
failure-time $T$ is non-negative, the random variable $C$ is the  non-negative
censoring time, where both $C$ and $T$ are contained in the
interval $[0,\tau]\equiv\mathcal{Y},$ for some constant $\tau>0$. The indicator $\Delta=\mathbf{{1}}\{T\leq C\}$ is
the current status indicator at time $C$, obtaining the value 1 when $T\leq C$, and 0 otherwise.
For example, in carcinogenicity testing, an animal is sacrificed at
a certain time point in order to determine presence or absence of
the tumor. In this example, $T$ is the time from exposure to a carcinogen
and until the presence of a tumor, $Z$ can be any explanatory information collected such as the weight of the animal, $C$ is the time point at which
the animal is sacrificed, and $\Delta$ is the current status indicator
at time $C$ (indicating whether the tumor has developed before the
censoring time, or not).

We now move to discuss a few definitions of loss functions and risks,
following \citet{steinwart_support_2008}. Let$\left(\mathcal{\mathcal{Z}},\mathcal{A}\right)$
be a measurable space and $\mathcal{Y}\subset\mathbb{\mathbb{R}}$
be a closed subset. Then a loss function is any measurable function
$L$ from $\mathcal{Y}\times\mathbb{R}$ to $[0,\infty)$.

Let $L:\mathcal{Y}\times\mathbb{R}\rightarrow[0,\infty)$
be a loss function and $P$ be a probability measure on $\mathcal{Z\times Y}$.
For a measurable function $f:\mathcal{Z}\mapsto\mathbb{\mathbb{R}}$,
the $L$-risk of $f$ is defined by $R_{L,P}\left(f\right)\equiv E_{P}\left[L\left(Y,f(Z)\right)\right]=\int_{Z\times Y}L\left(y,f(z)\right)dP(z,y)$.
A function $f$ that achieves the minimum $L$-risk is called a \textit{Bayes
	decision function} and is denoted by $f^{*}$, and the minimal $L$-risk
is called the \textit{Bayes risk} and is denoted by $R_{L,P}^{*}$.
Finally, the empirical $L$-risk is defined by $R_{L,D}\left(f\right)=\frac{1}{n}{\displaystyle \sum_{i=1}^{n}}L(y_{i},f(z_{i}))$. It is well known \citep[see, for example, ][]{hastie_elements_2013}
that the conditional expectation is the Bayes decision function with
respect to the quadratic loss. That is, $E[Y|Z]=f^{*}=\argmin_{f} R_{L,P}(f)$, where $L$ is the quadratic loss defined by $L(Y,f(Z))=(Y-f(Z))^2$.

Recall that our goal is to estimate the conditional expectation of the failure-time $T$ given the covariates $Z$. However, in the setting of current status data, the response variable (failure-time) is not observed, making the estimation procedure more complex. It is not even clear if and how loss functions can be defined with current status data. In the following section we construct a new modification of the quadratic loss that is based on the censoring time and on the current status indicator, and use it to estimate the conditional expectation of the unobservable failure-time. 
\section{\label{sec:Kernel-Machines}Kernel Machines for Current
	Status Data}
This section is divided into three subsections. We start by describing general kernel machines for uncensored data. Then we define a new loss function for current status data, utilizing an equality between risks, and incorporate it into the kernel machine framework. Finally we define the proposed estimator of the conditional expectation of the failure-time, with current status data, and discuss some assumptions regarding the censoring mechanism.


\subsection{Kernel Machines for Uncensored Data}
Let $\mathcal{H}$ be a reproducing kernel Hilbert space (RKHS) of
functions from $\mathcal{Z}$ to $\mathbb{R}$, where an RKHS is a
function space that can be characterized by some kernel function $k:\mathcal{Z}\times\mathcal{Z}\mapsto\mathbb{R}$. For more information on reproducing kernel Hilbert spaces, we refer the reader to \citealt[Chapter 4]{steinwart_support_2008}.
A continuous kernel $k$ for which the corresponding RKHS $\mathcal{H}$ is dense in the space of continuous functions on $\mathcal{Z}$, $C(\mathcal{Z})$, is called a universal kernel
(see, for example, \citealt[Definition 4.52]{steinwart_support_2008}).
Fix such an RKHS $\mathcal{H}$ and denote its norm by $\left\Vert \cdot\right\Vert _{\mathcal{H}}$.
Let $\{\lambda_{n}\}>0$ be some sequence of regularization constants.
A kernel machine decision function for uncensored data is defined by:
\[
f_{D,\lambda_{n}}=\arg\min{}_{f\in\mathcal{H}}\lambda_{n}\|f\|_{\mathcal{H}}^{2}+\frac{1}{n}\sum_{i=1}^{n}L(T_{i},f(Z_{i}))\,.
\]

\subsection{Equality Between Risks}

In this subsection we show that the risk can be represented as the sum of two terms
\begin{align*}
E\left[\frac{(1-\Delta)\ell(C,f(Z))}{g(C|Z)}\right]+E[L(0,f(Z))].
\end{align*}
We recall that current status data consists of $n$ independent and
identically-distributed random triplets $D=\{(Z_{1},C_{1},\Delta_{1}),\ldots,(Z_{n},C_{n},\Delta_{n})\}$.

Let $F(\cdot|Z=z)$ and $G(\cdot|Z=z)$ be the cumulative distribution
functions of the failure time and censoring, respectively, given the
covariates $Z=z$. Let $g(\cdot|Z=z)$ be the density of $G(\cdot|Z=z)$. 
Throughout this work we will assume the following:
\newenvironment{tight_enumerate}{
	\begin{enumerate}
		\setlength{\itemsep}{0pt}
		\setlength{\parskip}{0pt}
	}{\end{enumerate}}

\begin{enumerate}[\hspace{0.5cm}(1)]
	\renewcommand{\labelenumi}{(A\arabic{enumi})}
	\item   The censoring time $C$ is independent of the failure time
	$T$ given the covariates $Z$.\label{as:A1}
	\item  $C$ and $T$ take values in the interval $[0,\tau]\equiv\mathcal{Y}$ and $\underset{_{z\in\mathcal{Z},c\in \mathcal{Y}}}{\inf}g\left(c|z\right)\geq2\kappa>0$,
	for some $\kappa>0$. \label{as:A2}
\end{enumerate}
The conditional independence assumption~(A\ref{as:A1}) is a standard identifiability assumption in survival analysis (see, for example, \citealt{klein_survival_1992} and \citealt{klein_survival_1997}). Assumption~(A\ref{as:A2}) is needed in order to guarantee that integration with respect to $T$ and $C$ can be exchanged, and in order to allow for division by the censoring density. Similar assumptions were made by \citet{van_der_laan_locally_1998}.

For current status data, we introduce the following identity between
risks, following \citet{van_der_laan_locally_1998,laan_unified_2003}. We extend this
identity by incorporating loss functions and covariates. Let $L:\mathcal{Y}\times\mathbb{\mathbb{R}}\mapsto[0,\infty)$
be a loss function differentiable in the first variable. Let $\ell:\mathcal{Y}\times\mathbb{\mathbb{R}}\mapsto\mathbb{\mathbb{R}}$
be the derivative of $L$ with respect to the first variable.

We would like to find the minimizer of $R_{L,P}(f)$ over a set $\mathcal{H}$
of functions $f$. Note that
\begin{align*}
R_{L,P}(f)\equiv & E_{Z}E_{T|Z}L(T,f(Z))
=  E_{Z}\left[\int_{0}^{\tau}L(t,f(Z))dF(t|Z)\right]\\
= & E_{Z}\left[\int_{0}^{\tau}\ell(t,f(Z))(1-F(t|Z))dt-\left.L(t,f(Z))(1-F(t|Z))\right|_{0}^{\tau}\right]\\
= & E_{Z}\left[\int_{0}^{\tau}\ell(t,f(Z))(1-F(t|Z))dt\right]+E[L(0,f(Z))]\,,
\end{align*}
and that $(1-\Delta)=\mathbf{{1}}\{T>C\}$ and thus
\begin{align*}
E\left[\frac{(1-\Delta)\ell(C,f(Z))}{g(C|Z)}\right]= & E_{Z,T}\left[E_{C}\left[\left.\frac{\mathbf{{1}}\{T>C\}\ell(C,f(Z))}{g(C|Z)}\right|Z,T\right]\right]\\
= & E_{Z,T}\left[\int_{0}^{\tau}\frac{\mathbf{{1}}\{T>c\}\ell(c,f(Z))g(c|Z)}{g(c|Z)}dc\right]\\
= & E_{Z,T}\left[\int_{0}^{\tau}\mathbf{{1}}\{T>c\}\ell(c,f(Z))dc\right]\\
= & E_{Z}\left[\int_{0}^{\tau}\ell(c,f(Z))\int_{0}^{\tau}\mathbf{{1}}\{t>c\}dF(t|Z)dc\right]\\
= & E_{Z}\left[\int_{0}^{\tau}\ell(c,f(Z))(1-F(c|Z))dc\right]\,.
\end{align*}
\subsection{Kernel Machines for Current Status Data}
Hence, in order to estimate the minimizer of $R_{L,P}(f)$, one can
minimize a regularized version of the empirical risk with respect
to a new loss function defined by
\[
L^{n}(D,(Z,C,\Delta,s))=\frac{(1-\Delta)\ell(C,s)}{g(C|Z)}+L(0,s)\,.
\]
Note that this function need not be convex nor a loss function. Recall that we are interested in estimating the conditional expectation. This means that we would like to minimize the risk with respect to the quadratic loss. For
the quadratic loss, our new loss function becomes
\[
L^{n}(D,(Z,C,\Delta,s))=\frac{(1-\Delta)2(C-s)}{g(C|Z)}+s^{2}\,.
\]

Note that this function is convex but not necessarily a loss function
since it can obtain negative values. However, one can always add a constant to ensure positivity. Since this constant does not effect optimization it will be neglected hereafter. For a detailed explanation, see Appendix~\ref{positivity}. 

In order to implement this result into the kernel machine framework, we propose
to define the KM-CSD decision function for current status data by
\begin{equation}
f_{D,\lambda}=\arg\min{}_{f\in\mathcal{H}}\lambda\|f\|_{\mathcal{H}}^{2}+\frac{1}{n}{\displaystyle \sum_{i=1}^{n}}\left[\frac{(1-\Delta_{i})2(C_{i}-f(Z_{i}))}{g(C_{i}|Z_{i})}+(f(Z_{i}))^{2}\right].\label{eq: KM-CSD}
\end{equation}
Explicit computation of the decision function can be found in Appendix~\ref{closed-form}. Note that if the censoring mechanism is unknown, we can replace
the density $g$  in Eq.~\ref{eq: KM-CSD} with its estimate $\hat{g}$, as long as $\hat{g}$ is strictly positive on $[0,\tau]\equiv\mathcal{Y}$; in this case the kernel machine decision function is
\begin{align*}
f_{D,\lambda}=\arg\min{}_{f\in\mathcal{H}}\lambda\|f\|_{\mathcal{H}}^{2}+\frac{1}{n}{\displaystyle \sum_{i=1}^{n}}\left[\frac{(1-\Delta_{i})2(C_{i}-f(Z_{i}))}{\hat{g}(C_{i}|Z_{i})}+(f(Z_{i}))^{2}\right]
\end{align*}
(note the use of $\hat{g}$ instead of $g$ in the denominator).

We note that for current status data, the assumption of some knowledge
of the censoring distribution is reasonable, for example, when it
is chosen by the researcher \citep{jewell_current_2002}. In other
cases, the density can be estimated using either parametric or nonparametric density estimation
techniques such as kernel estimates. It should be noted that the censoring
variable itself is fully observed (not censored) and thus simple density estimation
techniques can be used in order to estimate the density $g$.


\section{\label{sec:Theoretical-Results}Theoretical Results}

The main goal of our work is to find a `good' estimator of the failure time conditional expectation. A good estimator should first and foremost be consistent, that is, its risk should converge in probability to the Bayes risk. Additionally, we would like such an estimator to be consistent for a large family of probability measures. 
The consistency proof is based on novel oracle inequalities that are presented
below.

We start by proving risk consistency of the KM-CSD learning method
for a large family of probability measures. We first assume that the censoring mechanism is known, which means that the true density of the censoring variable $g$ is known. Using
this assumption, and some additional conditions, we bound the difference
between the risk of the KM-CSD decision function and the Bayes risk
in order to form finite sample bounds. We use this result to show that the KM-CSD converges in probability
to the Bayes risk. That is, we demonstrate that for a large family
of probability measures, the KM-CSD learning method is consistent.
We then consider the case in which the censoring mechanism is unknown, and thus the density $g$ needs to be estimated. We estimate
the density $g$ using nonparametric kernel density estimation, and
develop a novel finite sample bound. We use this bound to prove that
the KM-CSD is consistent even when the censoring distribution is
unknown. 

For simplicity, we use the normalized version of the quadratic loss.
\begin{defn}
	\label{def: normalized loss}Let $L(y,s)=\frac{(y-s)^{2}}{\tau^{2}}$
	be the normalized quadratic loss, let $l(y,s)=\frac{2(y-s)}{\tau^{2}}$
	be its derivative with respect to the first variable, and let $L^{n}(D,(Z,C,\Delta,s))=\frac{1}{\tau^{2}}\left(\frac{(1-\Delta)2(C-s)}{g(C|Z)}+s^{2}\right)$
	be the proposed modified version of this loss.
\end{defn}

Since both $L$ and $l$ are convex functions with respect to $s$,
then for any compact set $\mathcal{S}=[-S,S]\subset\mathbb{R}$, Both
$L$ and $l$ are bounded and Lipschitz continuous with constants
$c_{L}$ and $c_{l}$ that depend on $\mathcal{S}$.
\begin{rem}
	\label{rem:loss bounds}$L(y,0)\leq1$ for all $y\in\mathcal{Y}$
	and $\ell(y,s)\leq B_{1}$ for all $(y,s)\in\mathcal{Y\times S}$
	and for some constant $B_{1}>0$.
\end{rem}
We need the following additional assumptions:
\begin{enumerate}[\hspace{0.5cm}(1)]
	\addtocounter{enumi}{2}
	\renewcommand{\labelenumi}{(A\arabic{enumi})}
	\item  $\mathcal{Z}\subset\mathbb{\mathbb{R}}^{d}$ is compact, \label{as:A3}
	\item  $\mathcal{H}$ is an RKHS of a continuous kernel $k$ with
	$\left\Vert k\right\Vert _{\infty}\le1$. \label{as:A4}
\end{enumerate}
Assumptions~(A\ref{as:A3}-A\ref{as:A4}) are standard technical assumptions in the kernel machines literature. 

Define the approximation error by $A_{2}(\lambda)=\underset{f\in\mathcal{H}}{\inf}\lambda\left\Vert f\right\Vert _{\mathcal{H}}^{2}+R_{L,P}(f)-R_{L,P}^{*}$. Define $B_{2}=c_{L}\lambda^{-\nicefrac{1}{2}}+1$ and $B=\frac{B_{1}}{2\kappa}+B_{2}$,
where $B_{1}$ is defined in Remark~\ref{rem:loss bounds}, $\kappa$ is defined in Assumption~(A\ref{as:A2}), $c_{L}$ is the Lipschitz constant of the normalized quadratic loss $L$, and $\lambda$ is the regularization parameter.

\subsection{Case I - The Censoring Density $g$ is Known}

In this section we develop finite sample bounds assuming that the
censoring density $g$ is known.
\begin{thm}
	\label{ theorem 1 - g known}Assume that (A\ref{as:A1})-(A\ref{as:A4}) hold. Then for
	fixed $\lambda>0,\, n\ge1,\,\varepsilon>0$, and $\theta>0$, with
	probability not less than $1-\exp(-\theta)$
\[
	   \lambda\left\Vert f_{D,\lambda}\right\Vert _{\mathcal{H}}^{2}+R_{L,P}(f_{D,\lambda})-R_{L,P}^{*}-A_{2}(\lambda)\\
	 \leq B\sqrt{\frac{2\log\left(2N(\sqrt{\frac{1}{\lambda}}B_{H},\left\Vert \cdot\right\Vert _{\infty},\epsilon)\right)+2\theta}{n}}+\frac{2c_{l}\varepsilon}{\kappa}+4c_{L}\varepsilon
	\]
	where $N(\lambda^{-\frac{1}{2}}B_{H},\left\Vert \cdot\right\Vert _{\infty},\epsilon)$
	is the covering number of the $\varepsilon-net$ of $\sqrt{\frac{1}{\lambda}}\overline{B_{H}}$
	with respect to supremum norm and where $\overline{B_{H}}$ is the closure of the unit ball
	of $\mathcal{H}$ (for further details see \citealt{steinwart_support_2008})
	.
\end{thm}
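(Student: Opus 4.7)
The approach is the classical finite-sample analysis for regularised risk minimisers in an RKHS, adapted to the data-dependent loss $L^n$. The pivotal observation, already established in Section~\ref{sec:Support-Vector-Machines}, is that for every measurable $f$
$$E_P\bigl[L^n(D,(Z,C,\Delta,f(Z)))\bigr] = R_{L,P}(f),$$
so $R_{L^n,D}(f)$ is an unbiased estimator of $R_{L,P}(f)$. Hence any uniform concentration of $R_{L^n,D}$ on a suitable ball of $\mathcal{H}$ will translate into a bound on the excess $L$-risk of $f_{D,\lambda}^c$.

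The first step is an a priori norm bound. Evaluating the CSD-SVM objective at $f = 0$ and using assumption (A2) together with $C \leq \tau$ shows $R_{L^n,D}(0)$ is bounded by an absolute constant, so $\|f_{D,\lambda}^c\|_\mathcal{H} \leq c\lambda^{-1/2}$; combined with (A4) this gives $\|f_{D,\lambda}^c\|_\infty \leq \lambda^{-1/2}$. The same reasoning shows that the infimum defining $A_2(\lambda)$ is approached inside the same ball $\lambda^{-1/2}\overline{B_H}$, so all subsequent estimates can be restricted there.

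The second step is a uniform deviation bound over $\lambda^{-1/2}\overline{B_H}$. Fix a minimal $\varepsilon$-net $\{f_1,\ldots,f_N\}$ in the supremum norm, with $N = N(\lambda^{-1/2}B_H, \|\cdot\|_\infty, \varepsilon)$. On this ball $|L^n|$ is bounded by $B = B_1/(2K) + B_2$: the weighted term $(1-\Delta)\ell(C,s)/g(C|Z)$ is controlled by $B_1/(2K)$ via Remark~\ref{rem:loss bounds} and (A2), while the $L(0,s)$ term is bounded by $B_2 = c_L\lambda^{-1/2}+1$ via Lipschitz continuity. Two-sided Hoeffding at each $f_j$ with a union bound then yields, with probability at least $1-e^{-\theta}$,
$$\max_{1\leq j\leq N}\bigl|R_{L^n,D}(f_j) - R_{L,P}(f_j)\bigr| \leq B\sqrt{\frac{2\log(2N)+2\theta}{n}}.$$
Lipschitz continuity of $L^n$ in its last argument --- with constant $c_l/(2K)$ from the $1/g$-weighted $\ell$-term and $c_L$ from $L(0,\cdot)$ --- extends the bound from the net to the full ball, contributing a remainder of $\frac{c_l\varepsilon}{K}+c_L\varepsilon$ for each of the empirical and population risks, which doubles to the $\frac{2c_l\varepsilon}{K}+4c_L\varepsilon$ summand appearing in the statement.

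The conclusion follows by the usual chaining. Since $f_{D,\lambda}^c$ is the SVM minimiser, for every $f\in\mathcal{H}$ with $\|f\|_\mathcal{H}\leq\lambda^{-1/2}$,
$$\lambda\|f_{D,\lambda}^c\|_\mathcal{H}^2 + R_{L^n,D}(f_{D,\lambda}^c) \leq \lambda\|f\|_\mathcal{H}^2 + R_{L^n,D}(f);$$
switching from $R_{L^n,D}$ to $R_{L^n,P}=R_{L,P}$ on both sides via the uniform deviation from step two, then taking the infimum over $f\in\mathcal{H}$ and subtracting $R_{L,P}^*$, yields exactly the displayed bound. The main technical hurdle is the bookkeeping in step two: the $1/g$ weighting amplifies the Lipschitz and boundedness constants by $1/(2K)$, and because $L^n$ is not guaranteed non-negative one must use the full range $[-B,B]$ in Hoeffding rather than a one-sided variant --- both issues are controlled by (A2) and by the norm bound from step one, but propagating the constants precisely to obtain the $2c_l/K$ prefactor requires care.
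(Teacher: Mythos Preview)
Your proposal is correct and follows essentially the same route as the paper's proof: an a~priori norm bound confining attention to $\lambda^{-1/2}\overline{B_H}$, the standard decomposition reducing the regularised excess risk to $2\sup_{\|f\|_{\mathcal H}\le\lambda^{-1/2}}|R_{L,P}(f)-R_{L^n,D}(f)|$, an $\varepsilon$-net together with Hoeffding's inequality and a union bound over the net points, and Lipschitz continuity of $L^n$ in its last argument (constant $c_l/(2K)+c_L$) to pass from the net to the full ball. The only slip is in the intermediate bookkeeping of the Lipschitz remainder: each of the population and empirical approximation terms contributes $\tfrac{c_l\varepsilon}{2K}+c_L\varepsilon$, so their sum is $\tfrac{c_l\varepsilon}{K}+2c_L\varepsilon$, which after the outer factor of~$2$ yields the stated $\tfrac{2c_l\varepsilon}{K}+4c_L\varepsilon$.
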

The proof of this theorem appears in Appendix \ref{sub:Proof-of-Theorem 1}.

We now move to discuss consistency of the KM-CSD learning method.
By definition, $P$-universal consistency means that for any $\epsilon>0$,

\begin{equation}
\underset{n\rightarrow\infty}{\lim}P(D\in(\mathcal{Z\times\mathcal{Y}})^{n}\,:\,R_{L,P}(f_{D,\lambda_{n}})\leq R_{L,P}^{*}+\epsilon)=1,\label{eq:3}
\end{equation}
where $R_{L,P}^{*}$ is the Bayes risk. Universal consistency
means that (\ref{eq:3}) holds for all probability measures $P$ on
$\mathcal{Z\times Y}$. However, in survival analysis we have the
problem of identifiability and thus we will limit our discussion to
probability measures that satisfy some identification conditions.
Let $\mathcal{P}$ be the set of all probability measures that satisfy
Assumptions (A\ref{as:A1})-(A\ref{as:A2}). We say that a learning method is $\mathcal{P}$-universal
consistent when (\ref{eq:3}) holds for all probability measures $P\in\mathcal{P}$.

In order to show $\mathcal{P}$-universal consistency, we utilize
the finite sample bounds of Theorem~\ref{ theorem 1 - g known}. The
following assumption is also needed for proving $\mathcal{P}$-universal
consistency:
\begin{enumerate}[\hspace{0.5cm}(1)]
	\addtocounter{enumi}{4}
	\renewcommand{\labelenumi}{(A\arabic{enumi})}
	\item  \label{as:A5} $k$ is a universal kernel. 
\end{enumerate}
Universal kernels are a wide family of kernel functions that include Gaussian and Taylor kernels. A kernel $k$ is called universal if the RKHS $\mathcal{H}$ of $k$
is dense in the space of continuous functions on $\mathcal{Z}$, $C(\mathcal{Z})$, with respect to the sup norm. Assumption~(A\ref{as:A5}) means that $\underset{f\in\mathcal{H}}{\inf}R_{L,P}(f)=R_{L,P}^{*}$,
for all probability measures $P$ on $\mathcal{Z\times Y}$. 
\begin{cor}
	\label{cor - consistency} Assume the setting of Theorem~\ref{ theorem 1 - g known}
	and that Assumption~(A\ref{as:A5}) holds. Assume that there exist constants $a\geq 1$ and $p>0$ such that $\log\left(N(B_{H},\left\Vert \cdot\right\Vert _{\infty},\epsilon)\right)\leq a\epsilon^{-2p}$. Let $\lambda_{n}$ be a sequence such
	that $\lambda_{n}\underset{n\rightarrow\infty}{\rightarrow}0$ and
	$\lambda_{n}^{1+p}n\underset{n\rightarrow\infty}{\rightarrow}\infty.$  Then the KM-CSD learning
	method is $\mathcal{P}$-universal consistent. \end{cor}
	The proof of this theorem appears in Appendix~\ref{sub:Proof-of-cor 1}.
	
	Note that the bound on the covering number $N(B_{H},\left\Vert \cdot\right\Vert _{\infty},\epsilon)$ in Corollary~\ref{cor - consistency} is satisfied for smooth kernels, such as polynomial and Gaussian
kernels, for arbitrarily small $p>0$ (see \citealt[Section 6.4]{steinwart_support_2008}).

\subsection{Case II - The Censoring Density $g$ is Unknown}
	Here we consider the case in which the censoring mechanism is unknown, and thus the density $g$ needs to be estimated. We estimate
	the density $g$ using nonparametric kernel density estimation, and
	develop a novel finite sample bound. We use this bound to prove that
	the KM-CSD is consistent even when the censoring distribution is
	unknown. Note that asymptotic results for kernel density estimators are well known in the literature (see, for example, \citealt{silverman1978}). However, to the best of our knowledge, finite sample bounds for this case do not exist and hence are developed here.
	
	For simplicity, we assume here that the censoring time $C$ is independent of the covariates $Z$. One can generalize the estimation procedure to include dependence of the censoring time $C$ on the covariates $Z$; for example, the conditional density estimate can be computed by the ratio of the joint density	estimate to the marginal density estimate.  In Lemma~\ref{lemma on density} we construct
	finite sample bounds on the difference between the estimated density
	$\hat{g}$ and the true density $g$. In Theorem~\ref{theorem g unknown}
	we utilize this bound to form finite sample bounds for the KM-CSD
	learning method.
	\begin{defn}
		\label{kernel of order m}We say that $K_m:\mathbb{\mathbb{R}\mapsto R}$
		(not to be confused with the kernel function $k$ of the RKHS $\mathcal{H})$
		is a kernel of order $m$, if the functions $u\mapsto u^{j}K_m(u)\,\,,j=0,1,...,m$
		are integrable and satisfy $\int_{-\infty}^{\infty}K_m(u)du=1$
		and $\int_{-\infty}^{\infty}u^{j}K_m(u)du=0,\,\, j=1,...,m$.
	\end{defn}
	
	\begin{defn}\label{Holder class}
		The H\"{o}lder class $\sum(\beta,\mathcal{L})$
		of functions $f:\mathbb{\mathbb{R}\mapsto R}$ is the set of $m=\left\lfloor \beta\right\rfloor $
		times differentiable functions whose derivative $f^{(m)}$ satisfies
		$\left|f^{(m)}(x)-f^{(m)}(x')\right|\leq\mathcal{L}\left|x-x'\right|^{\beta-m}$
		for any $x, x' \in \mathbb{R}$ and for some constant $\mathcal{L}>0$.\end{defn}
	\begin{lem}
		\label{lemma on density}Let $K_m:\mathbb{\mathbb{R}\mapsto R}$ be
		a kernel function of order  $m=\left\lfloor \beta\right\rfloor $ satisfying $\int_{-\infty}^{\infty}K_{m}^{2}(u)du<\infty$
		and define $\hat{g}(x)=(hn)^{-1}\stackrel{}{\sum_{i=1}^n K_m\left((C_{i}-x)/h\right)}$ where
		$h$ is the bandwidth. Suppose that the true density $g$ and its estimate $\hat{g}$ both satisfy
		$g(c),\hat{g}(c)\leq g_{max}<\infty$. Let us also assume that $g(c)$ belongs
		to the $\mathrm{H\ddot{o}lder}$ class $\sum(\beta,\mathcal{L})$.
		Finally, assume that $\int_{-\infty}^{\infty}\left|u\right|^{\beta}\left|K_m(u)\right|du<\infty$.
		Then for any $\theta>0$,
		\[
		Pr\left(\frac{1}{n}\sum_{i=1}^n \left|\hat{g}(C_{i})-g(C_{i})\right|>\sqrt{\frac{2D_{1}\theta}{n^{2}h}}+\frac{2g_{max}\theta}{3n}+D_{2}\cdot h^{\beta}\right)\leq \exp(-\theta)
		\]
		where $D_{1}=g_{max}\int_{-\infty}^{\infty}K_m^{2}\left(v\right)dv$ and
		$D_{2}=\mathcal{L}\left|\pi\right|^{\beta-m}/m!
		\int_{-\infty}^{\infty}{\left|K_m\left(v\right)\right|}\left|v\right|^{\beta}dv$
		are constants, and for some $\pi\in[0,1]$.
	\end{lem}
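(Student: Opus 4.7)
The proof plan is a classical bias-variance decomposition for a kernel density estimator, specialized to the empirical $L^1$ error on the sample points. Introduce the smoothed target $\tilde g(x) = \int K(u)\, g(x+uh)\, du$ and decompose $\hat g(c_i) - g(c_i) = [\hat g(c_i) - \tilde g(c_i)] + [\tilde g(c_i) - g(c_i)]$. The first bracket is stochastic and will be controlled in probability; the second bracket is deterministic and will yield the $C_2 h^\beta$ bias term.

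For the bias piece, I would Taylor-expand $g(x+uh)$ to order $m = \lfloor\beta\rfloor$ about $x$. The order-$m$ kernel assumptions $\int K(u)\, du = 1$ and $\int u^j K(u)\, du = 0$ for $j = 1, \ldots, m$ annihilate every polynomial term in the expansion, leaving only the Lagrange remainder. Using the H\"older bound $|g^{(m)}(x + \pi u h) - g^{(m)}(x)| \le \mathcal{L} |\pi u h|^{\beta - m}$ and the integrability of $|u|^\beta |K(u)|$, the pointwise estimate $|\tilde g(x) - g(x)| \le C_2 h^\beta$ holds uniformly in $x$, so $\frac{1}{n}\sum_{i=1}^n |\tilde g(c_i) - g(c_i)| \le C_2 h^\beta$ deterministically.

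For the stochastic piece, I would compute the pointwise variance $\mathrm{Var}(\hat g(x)) \le \frac{1}{nh^2}\, E[K^2((C-x)/h)] \le \frac{g_{\max}}{nh}\int K^2(u)\, du = C_1/(nh)$ by a standard change of variables $u = (c-x)/h$ together with $g \le g_{\max}$. Setting $X = \frac{1}{n}\sum_i |\hat g(c_i) - \tilde g(c_i)|$, Jensen's inequality gives $X^2 \le \frac{1}{n}\sum_i (\hat g(c_i) - \tilde g(c_i))^2$, and exchangeability of the $c_i$ yields $E[X^2] \le C_1/(nh)$. Markov's inequality then produces $\Pr(X > \epsilon) \le E[X]/\epsilon \le \sqrt{E[X^2]}/\epsilon \le \sqrt{C_1/(nh\epsilon^2)}$, and combining this with the bias bound gives the lemma.

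The main technical nuisance, and the step I expect to need the most care, is that $\hat g(c_i)$ uses $c_i$ itself as one of the kernel centers: conditional on $c_i$, only the $n-1$ summands with $j \neq i$ are i.i.d., while the $j = i$ term contributes a deterministic $K(0)/(nh)$ that makes $\tilde g(c_i)$ differ from the true conditional expectation of $\hat g(c_i)$. I would address this by explicitly peeling off the self-term from the i.i.d.\ tail: the tail supplies the variance bound $C_1/(nh)$ above, while the self-contribution adds only an $O(1/(nh))$ correction to the mean which is lower order than $C_2 h^\beta$ and can be absorbed. Equivalently, one can first prove the inequality for the leave-one-out estimator $\hat g_{-i}(c_i) = \frac{1}{(n-1)h}\sum_{j \neq i} K((C_j - c_i)/h)$ — for which the conditional analysis is entirely clean — and then control the $O(1/(nh))$ gap $\hat g(c_i) - \hat g_{-i}(c_i)$ directly.
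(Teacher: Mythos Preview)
Your proposal is correct and follows essentially the same route as the paper: a bias--variance split $|\hat g(c_i)-g(c_i)|\le |\hat g(c_i)-E\hat g(c_i)|+|E\hat g(c_i)-g(c_i)|$, the Tsybakov-style pointwise bounds $\mathrm{Var}(\hat g(c))\le C_1/(nh)$ and $|E\hat g(c)-g(c)|\le C_2 h^\beta$, Cauchy--Schwarz/Jensen to pass from second to first moment, and Markov's inequality for the stochastic term. The one place you are more careful than the paper is the self-term issue: the paper's proof silently treats the evaluation points $c_i$ as fixed (so that $E[\hat g(c_i)]=\tilde g(c_i)$ and the $\eta_j(c_i)$ are i.i.d.\ centered) and never discusses the dependence you flag; your leave-one-out / $O(1/(nh))$ peel-off is the honest way to close that gap.
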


The proof of the lemma is based on \citet[Propositions 1.1 and 1.2]{tsybakov_introduction_2008}
together with basic concentration inequalities; the proof can be found
in Appendix \ref{sub:Proof-of-Lemma 1}.

We now move to construct finite sample bounds for the KM-CSD learning
method when $g$ is unknown using the above lemma. We assume that
$\hat{g}$ is the kernel density estimate of $g,$ such that the conditions
of Lemma~\ref{lemma on density} hold.
\begin{thm}
	\label{theorem g unknown}Assume that (A\ref{as:A1})-(A\ref{as:A4}) hold. Assume the setting
	of Lemma~\ref{lemma on density} and that $\underset{_{z\in\mathcal{Z},c\in C}}{\inf}\hat{g}\left(c\right)\geq \kappa>0,$
	for some $\kappa>0$. 
	Then for fixed $\lambda>0,\,\theta>0,\, n\ge1,\,\varepsilon>0$, we
	have with probability not less than $1-2\exp(-\theta)$ that
	
\[
	\lambda\left\Vert f_{D,\lambda}\right\Vert _{\mathcal{H}}^{2}+R_{L,P}(f_{D,\lambda})-R_{L,P}^{*}-A_{2}(\lambda)
\leq B\sqrt{\frac{2log\left(2N(\sqrt{\frac{1}{\lambda}}B_{H},\left\Vert \cdot\right\Vert _{\infty},\epsilon)\right)+2\theta}{n}}+\frac{3c_{l}\varepsilon}{\kappa}+4c_{L}\varepsilon+2\eta
\]

	where $\eta\equiv\frac{B_{1}}{2\kappa^{2}}\left(\sqrt{\frac{2D_{1}\theta}{n^{2}h}}+\frac{2g_{max}\theta}{3n}+D_{2}\cdot h^{\beta}\right)$.
\end{thm}
The proof of the theorem appears in Appendix \ref{sub:Proof-of-Theorem 2}.

Using the above theorem we show that under some mild conditions, the KM-CSD
decision function converges in probability to the conditional expectation.
\begin{cor}
	\label{consistency 2}Assume the setting
	of Theorem~\ref{theorem g unknown} and assume that Assumption~(A\ref{as:A5}) holds. Assume that there exist constants $a\geq 1$ and $p>0$ such that $\log\left(N(B_{H},\left\Vert \cdot\right\Vert _{\infty},\epsilon)\right)
	\\ \leq a\epsilon^{-2p}$. Let $\lambda_{n}$ be a sequence such that $\lambda_{n}\underset{n\rightarrow\infty}{\rightarrow}0$
	and that $\lambda_{n}^{1+p}n\underset{n\rightarrow\infty}{\rightarrow}\infty.$
	Then the KM-CSD learning
	method is $\mathcal{P}$-universal consistent. \end{cor}
	
	The proof of this theorem appears in Appendix~\ref{sub:Proof-of-cor 2}.

We refer the readers to Appendix~\ref{Learning rates} for a straightforward derivation of learning rates that are based on the same oracle inequalities of Theorem~\ref{ theorem 1 - g known} and \ref{theorem g unknown}.
\section{\label{sec:Simulation-Study}Simulation Study}

We test the KM-CSD learning method on simulated data
and compare its performance to current state of the art. We construct
four different data-generating mechanisms, including one-dimensional
and multi-dimensional settings. For each data type, we compute the
difference between the KM-CSD decision function and the true expectation.
We compare this result to results obtained by the Cox model and by
the AFT model. We were not able to gain access to other nonparametric methods and hence will not compare them to our approach. As a reference, we compare all these methods to the
Bayes risk, which we calculated for the simulations.

For each data setting, we considered two cases: (i) the censoring
density $g$ is known; and (ii) the censoring density is unknown.
For the second setting, the distribution of the censoring variable
was estimated using univariate nonparametric kernel density estimation with a
Gaussian kernel. For simplicity, we assumed that the censoring time $C$ is independent of the covariates $Z$. The code was written in Matlab, using the Spider library\footnote{The Spider library for Matlab can be downloaded from \url{https://people.kyb.tuebingen.mpg.de/spider/main.html}.}. In order to fit the Cox model to current status data, we downloaded
the `ICsurv' R package \citep{mcmahan_icsurv:_2014}. In this package,
monotone splines are used to estimate the cumulative baseline hazard
function, and the model parameters are then chosen via the EM algorithm.
We chose the most commonly used cubic splines. To choose the number
and locations of the knots, we followed \citet{ramsay_monotone_1988}
and \citet{mcmahan_regression_2013} who both suggested using a fixed
small number of knots and thus we placed the knots evenly at the quartiles.
For the AFT model, we used the `surv reg' function in the `Survival'
R package \citep{therneau_survival:_2016}. In order to call R through
Matlab, we installed the R package rscproxy \citep{baier_rscproxy:_2012},
installed the statconnDCOM server \citep{baier_excel_2007}, and download the Matlab R-Link toolbox \citep{henson_matlab_2004}. For
the kernel of the RKHS $\mathcal{H}$, we used both a linear kernel
and a Gaussian RBF kernel $k(x_{i},x_{j})=\exp\left(\left\Vert x_{i}-x_{j}\right\Vert _{2}^{2}/2\sigma^{2}\right)$,
where $\sigma$ and $\lambda$ were chosen using 5-fold cross-validation. Cross validation is commonly used for kernel machine parameter selection (see, for example, \citealt{steinwart_support_2008}). \cite{vaart_oracle_2006} developed oracle inequalities for penalized risk minimization with multi-fold cross validation. This result can be applied to kernel machines and justifies the use of cross validation for parameter selection. Since in our case the failure time $T$ is not observed, using cross-validation with current status data is not trivial. However, one can use cross-validation with respect to our proposed loss. Recall that the risk with respect to the original loss function equals to the risk with respect to our proposed loss function. Hence we used 5-fold cross-validation with respect to the empirical risk obtained by our proposed loss.
The code for the algorithm and for the simulations can be found in the \nameref{sub:Supplementary-Material}.

We consider the following four failure time distributions, corresponding
to the four different data-generating mechanisms: (1) Weibull, (2)
Multi-Weibull, (3) Multi-Log-Normal, and (4) an additional example
where the failure time expectation is triangle shaped. We present
below the KM-CSD risks for each case and compare them to risks obtained
by other methods. The risks are based on 100 iterations per sample
size. The Bayes risk is also plotted as a reference. The Bayes risk was calculated based on the Monte Carlo method where a large number of observations were drawn from the true failure time distribution; the empirical risk was then calculated.

In Setting 1 (Weibull failure-time), the covariates $Z$ are generated
uniformly on $[0,1],$ the censoring variables $C$ is generated uniformly
on $[0,\tau],$ and the failure time $T$ is generated from a Weibull
distribution with parameters $scale=\exp(-0.5Z),\, shape=2$.
The failure time was then truncated at $\tau=1$.

Figure~\ref{fig:Weibull-failure-time} compares the results obtained
by the KM-CSD to results achieved by the Cox proportional hazards (PH) model and by the AFT
model, for different sample sizes. 

\begin{figure}[!t]
	\includegraphics[scale=0.8]{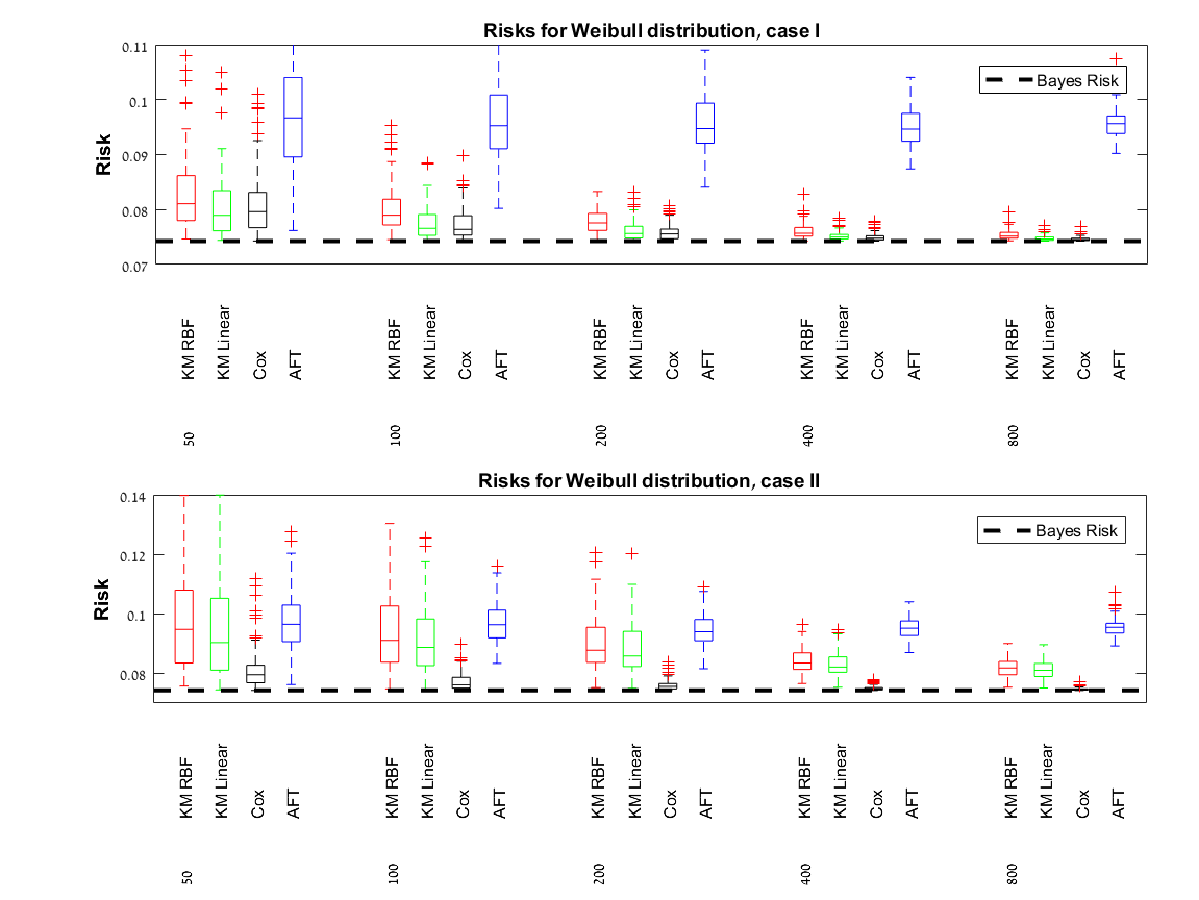}\protect\caption{Weibull failure time distribution. The Bayes risk is the dashed
		line and the boxplots of the following risks are compared: KM-CSD
		with an RBF kernel, KM-CSD with a linear kernel, Cox, and AFT, for
		sample sizes $n=50,100,200,400,800$.\label{fig:Weibull-failure-time}}
\end{figure}
In Setting 2 (Multi-Weibull failure-time), the covariates $Z$ are
generated uniformly on $[0,1]^{10},$ and the censoring variable $C$
is generated uniformly on $[0,\tau]$, as in setting 1. The failure
time $T$ is generated from a Weibull distribution with parameters
$scale=-0.5Z_{1}+2Z_{2}-Z_{3},\, shape=2$. The failure time was then
truncated at $\tau=2$. Note that this model depends only on the first
three variables. In Figure~\ref{fig:Multi-Weibull-failure-time},
boxplots of risks are presented.

\begin{figure}[!t]
	\includegraphics[scale=0.8]{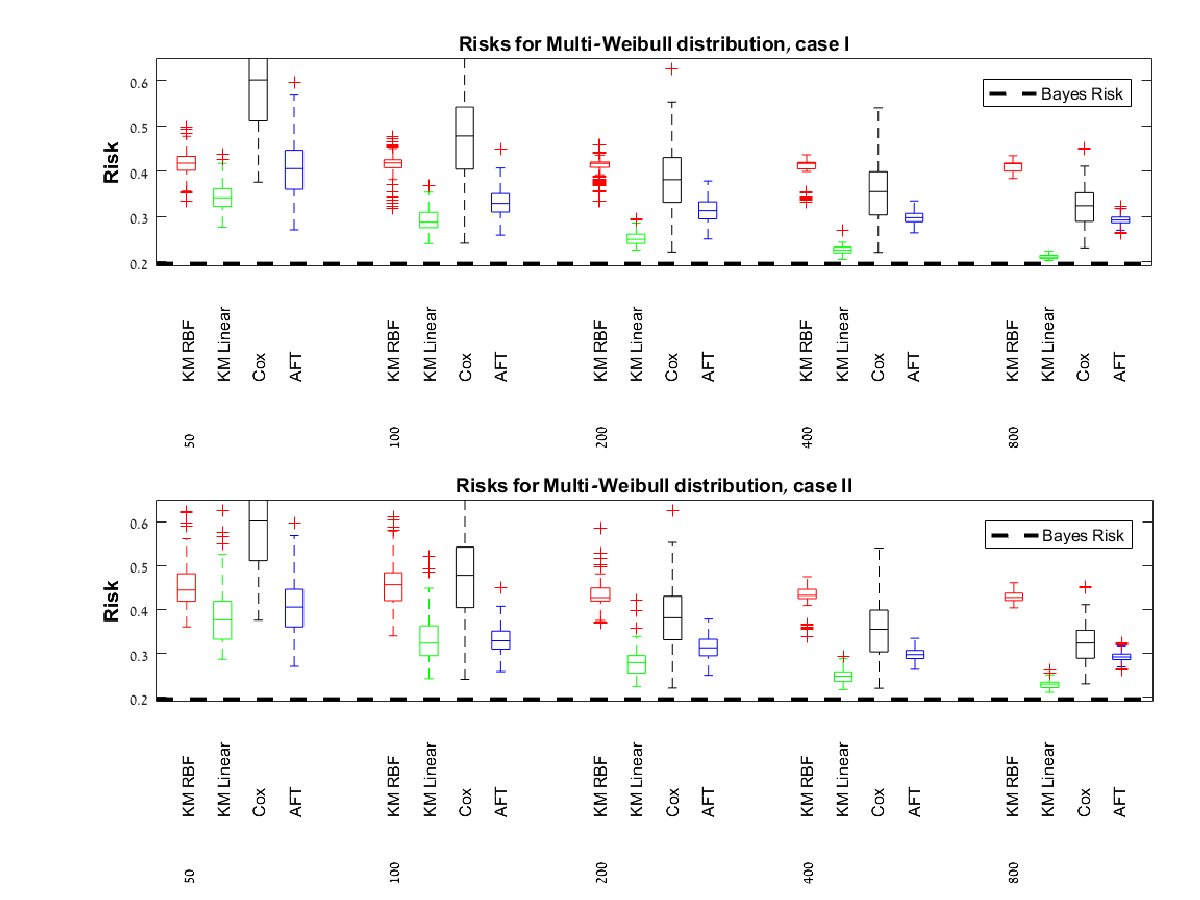}
	
	\protect\caption{Multi-Weibull failure time distribution. The Bayes risk is the dashed
		line and the boxplots of the following risks are compared: the
		KM-CSD with an RBF kernel, the KM-CSD with a linear kernel, Cox,
		and AFT, for sample sizes $n=50,100,200,400,800$. \label{fig:Multi-Weibull-failure-time}}
\end{figure}

In Setting 3 (Multi-Log-Normal), the covariates $Z$ are generated
uniformly on $[0,1]^{10},$ $C$ was generated as before and the failure
time $T$ was generated from a Log-Normal distribution with parameters
$\mu=\frac{1}{2}(0.3Z_{1}+0.5Z_{2}+0.2Z_{3}),\,\sigma=1$. The failure
time was then truncated at $\tau=7$. Figure~\ref{fig:Log-Normal-failure-time}
presents the risks of the compared methods.

\begin{figure}[!t]
	\includegraphics[scale=0.8]{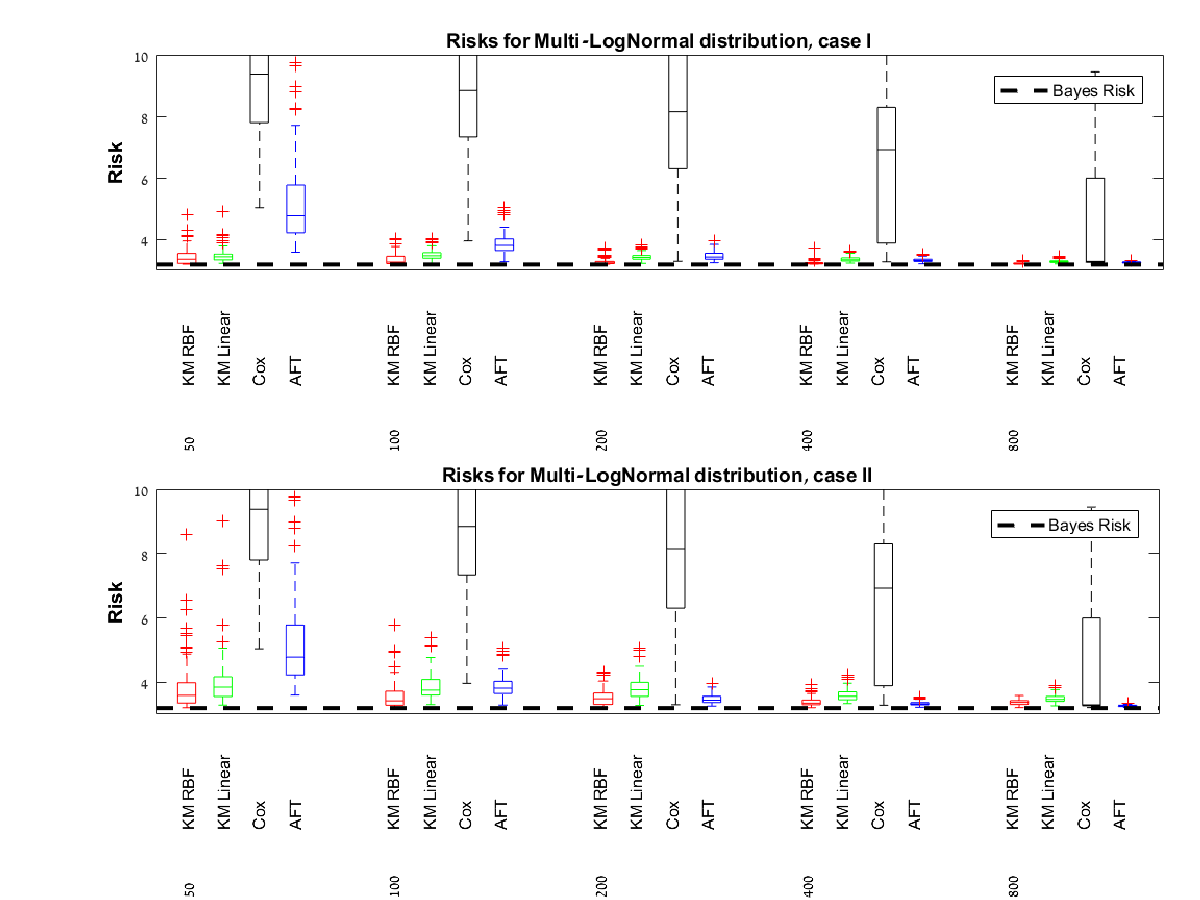}
	
	\protect\caption{Multi-LogNormal failure time distribution. The Bayes risk is the dashed
		line and the boxplots of the following risks are compared: the
		KM-CSD with an RBF kernel, the KM-CSD with a linear kernel, Cox,
		and AFT, for sample sizes $n=50,100,200,400,800$. \label{fig:Log-Normal-failure-time}}
\end{figure}

In Setting 4, we considered a non-smooth conditional expectation function
in the shape of a triangle. The covariates $Z$ are generated uniformly
on $[0,1],$ $C$ is generated uniformly on $[0,\tau]$, and $T$
is generated according to the following
\[
T=\begin{cases}
4+6\cdot Z+\epsilon & ,Z\leq0.5\\
10-6\cdot Z+\epsilon & ,Z>0.5
\end{cases},\,\, \mathrm{where}\,\,\epsilon\sim N(0,1).
\]
The failure time was then truncated at $\tau=8$. In Figure~\ref{fig:triangle box I}, the boxplots of risks are presented.

\begin{figure}[!t]
	\includegraphics[scale=0.8]{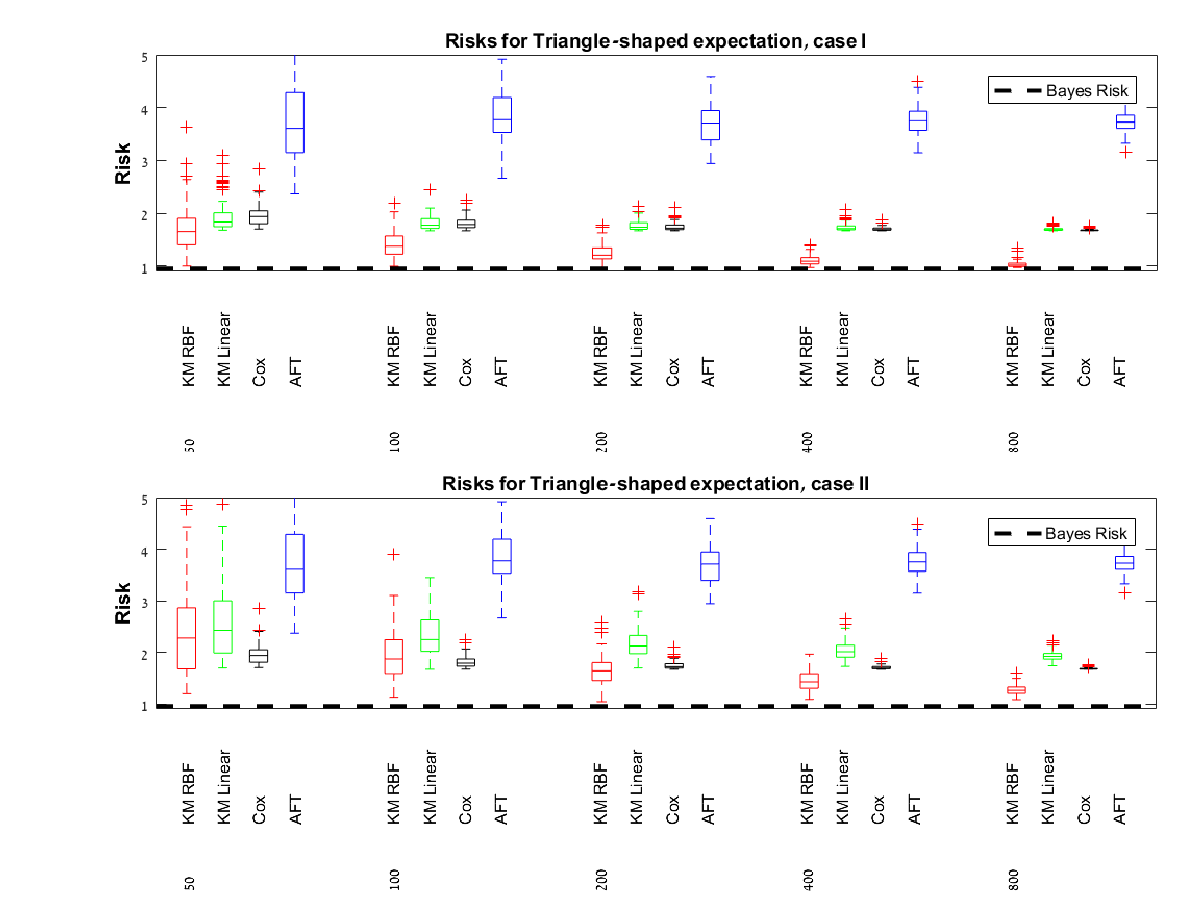}
	
	\protect\caption{Triangle shaped failure time expectation. The Bayes risk is the dashed
		line and the boxplots of the following risks are compared: the
		KM-CSD with an RBF kernel, the KM-CSD with a linear kernel, Cox,
		and AFT, for sample sizes $n=50,100,200,400,800$. \label{fig:triangle box I}}
\end{figure}

To illustrate the flexibility of the KM-CSD, we also present a graphical
representation of the true conditional expectation and its estimates,
as a function of the covariates. Figure~\ref{fig:triangle plot I}
compares the true expectation to the computed estimates for the case
that $g$ is known; these estimates are based on the first iteration.
As can be seen, the KM-CSD with an RBF kernel produces the most superior
results.

\begin{figure}[!t]
	\begin{center}
		\includegraphics[scale=0.9]{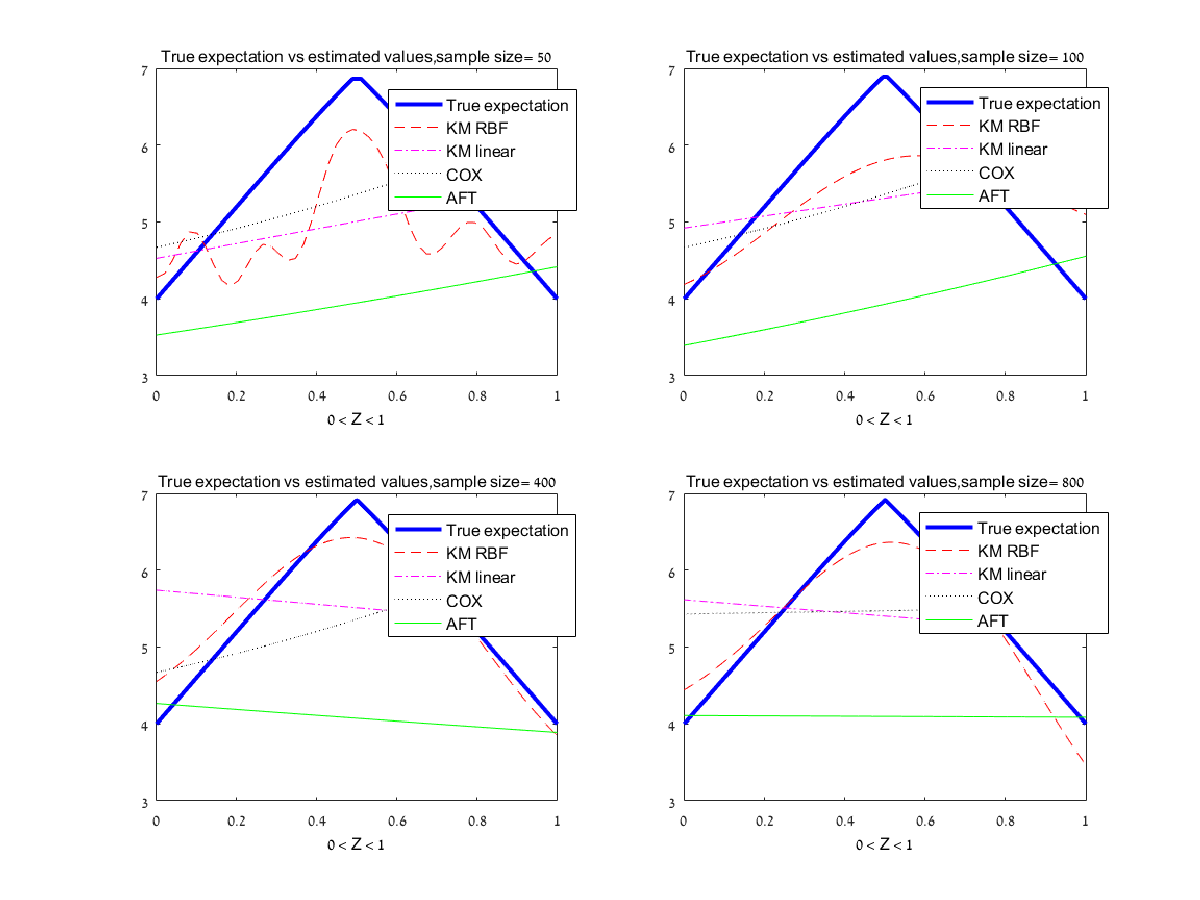}
		
		\protect\caption{Triangle shaped failure time expectation, case I ($g$ is known).
			The true expectation is the thick triangle shaped line. The following estimates are
			compared: the KM-CSD with an RBF kernel, the KM-CSD with a linear
			kernel, Cox, and AFT, for sample sizes $n=50,100,400,800$.\label{fig:triangle plot I}}
	\end{center}
\end{figure}

To summarize, Figures \ref{fig:Weibull-failure-time}-\ref{fig:triangle plot I}
showed that the KM-CSD is comparable to other known methods for estimating
the failure time distribution with current status data, and in certain
cases is even better. Specifically, we found that the KM-CSD with
an appropriate kernel was superior in three out of the four examples,
especially when the true density $g$ is known. It should be noted
that even when the assumptions of the other models were true, the KM-CSD
estimates were comparable. Additionally, when these assumptions fail
to hold, the KM-CSD estimates were generally better. The main advantage
of the proposed kernel machines approach is that it does not assume any parametric
form and thus may be superior, especially when the assumptions of
other models fail to hold. Additionally, it seems that the KM-CSD
can perform well in higher dimensions.

\section{Real World Data Analysis}\label{sec:Case Study}
In this section we test our approach on two real-world data sets, and compare its performance to current state of the art. The first data set is current status data from immunological studies, and the second is real world data concerning news popularity, with artificial censoring. Note that the second data set was artificially censored by us, allowing us to train our method on current status data, and to test it on the true uncensored data. We used the mean squared error (MSE) in order to determine the best fit. 
\subsection{Current Status Data from Immunological Studies}\label{sec:study1}

We present an analysis of real world serological data\footnote{ This dataset can be found at \url{https://www.dropbox.com/s/h120ml7pc68u63d/RCodeBook.zip?dl=0}.} on PVB19 and VZV infections. Both PVB19 and VZV cause a variety of diseases that mainly occur in childhood. The data was collected in Belgium between 2001 and 2003, as described in \cite{hens_modeling_2012}. Blood samples were tested for presence of infection-specific IgG antibodies, reflecting infection experience. In addition, age at the time of data collection was registered. These blood samples are classified as either being seropositive or seronegative, based on some cut-off level, thus yielding current status data, with patient age being the monitoring time. The statistical analysis included in this paper is based on serological data on 2382 subjects with known immunological status for both PVB19 and VZV.

For our analysis, we use the patient's age at the time of data collection as the monitoring time ($C$). We consider the continuous IgG antibody level of B19 as a covariate ($Z$) explaining the presence of the current status indicator VZV ($\Delta$). Note that we are treating the IgG antibody level of B19 as a baseline covariate, since we only have a single measurement of this antibody level. Also note that \cite{hens_modelling_2008} and \cite{abrams_modeling_2015} have investigated the association between VZV and B19, and have shown that they share the same transmission route. Hence, there is a scientific justification for using the continuous IgG antibody level of B19 as a covariate explaining the presence of VZV. 


We test our proposed KM-CSD on this data and compare it to estimates obtained by the Cox model and the AFT model. For the kernel of the RKHS $\mathcal{H}$, we used both a linear kernel and a Gaussian RBF kernel, where the kernel width $\sigma$ and the regularization parameter $\lambda$ were chosen using 5-fold cross-validation. It should be noted that we first standardized the covariates $Z$ (PVB19 antibody level) in order to suggest a reasonable selection of kernel widths. As before, the density of the censoring variable
was estimated using nonparametric kernel density estimation with a
normal kernel. In Figure \ref{fig:B19}, we present the results of the estimated expectation of time-to-infection of VZV, as a function of the covariates, for all four methods: KM with an RBF kernel, KM with a linear kernel, Cox, and AFT. It should be noted that since we do not know the true time-to-infection, we cannot argue that any model is superior. All four methods agree that there is a decreasing linear connection between time to infection of VZV, and B19 antibody level. In other words, the higher the level of PVB19, the lower the age of infection with VZV. This outcome supports previous research on joint transmission routes of VZV and B19. Further serological research can be done in order to better understand this relationship.

\begin{figure}[!t]
	\includegraphics[scale=0.26]{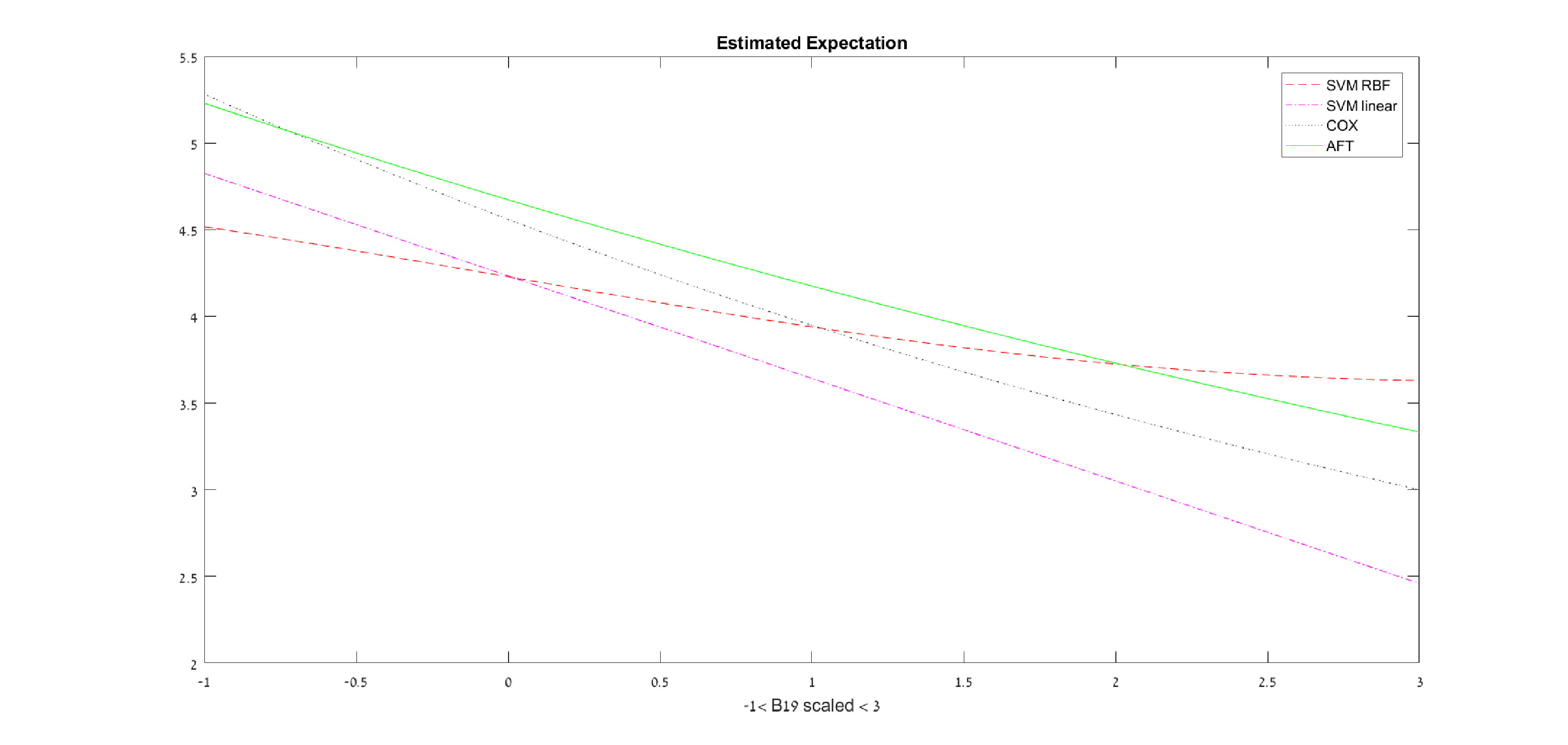}
	
	\protect\caption{Conditional expectation of time to infection of VZV as a function of the standardized antibody level of B19. The following estimates are
		compared: the KM-CSD with an RBF kernel, the KM-CSD with a linear
		kernel, Cox and AFT. \label{fig:B19}}
\end{figure}

\subsection{Artificially Censored Real-World Data}\label{sec:study2}
For our second analysis, we used real-world data on news popularity\footnote{This dataset can be found at  \url{https://archive.ics.uci.edu/ml/datasets/Online+News+Popularity}.}, with artificial censoring. The original data summarizes a set of features regarding articles published by Mashable, in a period of two years, as described in \cite{fernandes_proactive_2015}.  The goal is to predict the number of shares of an article in social networks, referred to as `popularity'. Since the number of shares is non-negative, we consider it as our failure-time $T$. The original dataset contains 58 predictive attributes. As before, we first standardized the covariates $Z$. In order to reduce the dimensionality of the data, we used the LASSO method for subset selection \citep{tibshirani_regression_1996}. For the sake of our analysis, we used the six most important explanatory variables. In order to obtain current status data, we generated the monitoring times $C_1,...,C_n$ as random exponential variables with mean equal to the mean number of shares. We then calculated the current status indicator by $\Delta=\boldsymbol{1}_{\{T<C\}}$. In summary, the artificially censored data consists of six covariates, the current status indicator, and the monitoring time generated from an exponential distribution. The uncensored data after standardization and dimensionality reduction, and its artificially censored version, can be found in the \nameref{sub:Supplementary-Material}.

Since the original dataset contains 39,644 entries, we divided it randomly into 35 training sets of 1000 observations, and one testing set of 4644 observations. The training sets consisted of the artificially censored data, whereas the testing data contained the original uncensored scaled number of shares. We trained the KM-CSD, with both a linear and a Gaussian RBF kernel, as well as Cox and AFT, on each training set. As before, the kernel width $\sigma$ and the regularization parameter $\lambda$ were chosen using 5-fold cross-validation. For a fair comparison, we estimated the density of the censoring variable using nonparametric kernel density estimation with a
normal kernel, and did not use our knowledge regarding the censoring mechanism. For each training set, we computed the model predictions on the testing set and calculated the corresponding MSE. Since the MSE is sensitive to the overall
scale of the response variable, we divided the MSE by the empirical variance of the number of shares in order to achieve standardized MSE. Figure~\ref{fig:MSEs} presents the boxplot of the standardized MSEs (SMSEs), for all four methods: KM-CSD with an RBF kernel, KM-CSD with a linear kernel, Cox, and AFT. Figure~\ref{fig:MSEs} shows that the SMSEs produced by the KM-CSD, with either a linear or a Gaussian kernel, is similar to the SMSEs produced by the Cox model, and is significantly lower than the SMSEs produced by the AFT model. In fact, the KM-CSD with a linear kernel produced the lowest SMSEs, whereas the KM-CSD with a Gaussian RBF kernel produced the SMSEs with the lowest variance. Additionally, for better readability of the results, we split the results into two sub-figures since the AFT produced much larger SMSEs than the other methods. It should also be noted that for some training sets, the AFT SMSE was so high that we had to omit it from the graphical representation. 
\begin{figure}[!t]

	\includegraphics[scale=0.26]{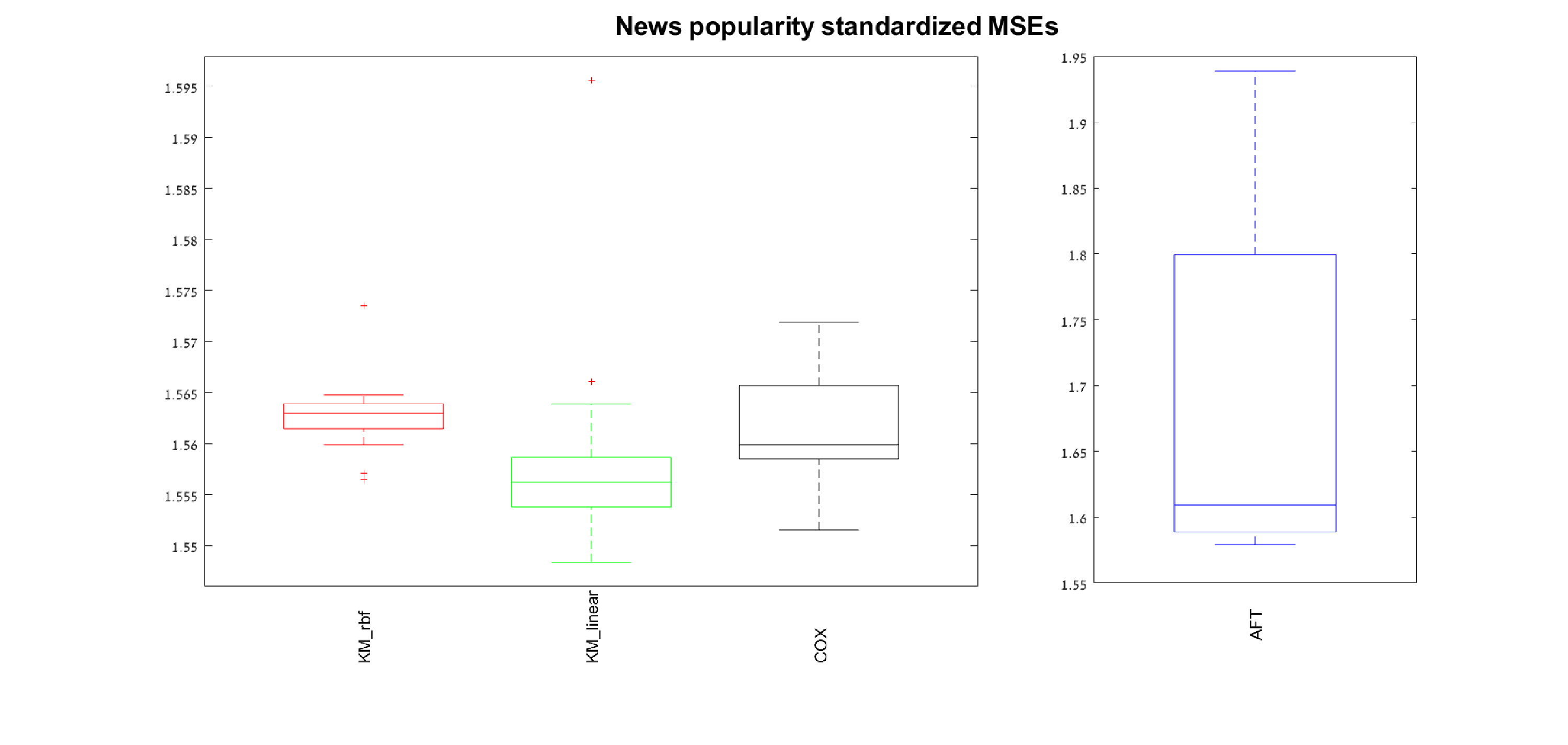}
	
	\protect\caption{SMSEs of predicted number of shares, based on 35 training sets. The following estimates are
		compared: the KM-CSD with an RBF kernel, the KM-CSD with a linear
		kernel, Cox, and AFT. Note the different scale between the left and right subplots.  \label{fig:MSEs}}

\end{figure}

\section{\label{sec:Concluding-Remarks}Concluding Remarks}

We proposed a kernel-machine approach for estimation of the failure time expectation,
studied its theoretical properties, presented a simulation study, and tested our approach on two real-world data sets. Specifically, we proved that our method is consistent, and showed by simulations and analysis of real-world data that our approach is just as good as current state of the art, and sometimes even better.
We believe this work demonstrates an important approach in applying
machine learning techniques  to current status data. However, many
open questions remain and many possible generalizations exist. First,
note that we only studied the problem of estimating the failure time
expectation and not other distribution related quantities. Further
work needs to be done in order to extend the kernel machines approach to other
estimation problems with current status data, and is beyond the scope of this paper. Note that the theory developed here might not hold in such generalizations, as the corresponding modified loss function will no longer be a convex function. Second, we assumed that
the censoring is independent of the failure time given the covariates
and that the censoring density is positive given the covariates over
the entire observed time range. It would be worthwhile to study the
consequences of violation of some of these assumptions. Third, it
could be interesting to extend this work to other censored data formats
such as interval censoring. We believe that further development and
generalization of kernel machine learning methods for different types of censored
data is of great interest. Some additional generalization of this work can include derivation of doubly-robust estimators and inclusion of time-dependent covariates. 
For the case of time-dependent covariates, one first needs to define an RKHS over the covariate process space and then to define the appropriate empirical risk minimization. Since this space is rich, the covering number results discussed in Section \ref{sec:Theoretical-Results} may not hold for this space. 

\section*{\label{sub:Supplementary-Material}Supplementary Materials}
The appendices referenced in Sections~\ref{sec:Kernel-Machines} and \ref{sec:Theoretical-Results}, the Matlab code for the algorithm, simulations, and data-analysis, and the artificially censored data used in Section \ref{sec:study2}, are available with this article.
		\subsection*{Matlab code} Folder `SVR for CSD' containing the Matlab code for both the algorithm and for
		the simulations. Please read the README.pdf for details on the files in this folder. A link to the folder can be \href{https://www.dropbox.com/sh/v4xbx5mv3im9xns/AAChkLY6A4lcg5m_8JjPMHdva?dl=0}{found here}.

		\subsection*{Artificially censored data set} The \href{https://www.dropbox.com/sh/zl7ubx0to69g868/AAAvd-aWFJV5TJo5WhMGPW8ba?dl=0}{data and the code} for the data analysis in Section \ref{sec:study2}. This includes the uncensored data after standardization and dimensionality reduction and its artificially censored version, and the code that performs the analysis and produces Figure~\ref{fig:MSEs}. Both data sets are Matlab .mat files, and the data analysis code is a Matlab .m file. The code is based on the functions defined in the folder `SVR for CSD' described above.

%
%
%

\section*{Acknowledgements}
The authors would like to thank Niel Hens for sharing the serological dataset on VZV and PVB19, and for helpful discussions.

\section*{Conflict of interest}
The authors declare that they have no conflict of interest.
\appendix
	\section*{Appendix}
    \renewcommand{\thesubsection}{\Alph{subsection}}
	
	\subsection{\label{closed-form} Computation of the Decision Function}
	
	Equation~\ref{eq: KM-CSD} is a quadratic optimization problem. Such problems are vastly studied in the literature (see, for example, \citealt{suykens_least_1999}) and known solutions exist. Specifically, using the representer theorem (\citealt[Theorem 5.5]{steinwart_support_2008}), the solution of Equation~\ref{eq: KM-CSD} is given by \[f_{D,\lambda}(Z)=\sum_{i=1}^n \alpha_{i}k(Z,Z_{i})+b.\] Using the Lagrange method, the quadratic optimization problem in Eq.~\ref{eq: KM-CSD} can be simplified to a set of linear equations (see, for example, \citealt{fletcher_practical_1987}). Hence, it can be shown that the coefficients  $\alpha_{1},...,\alpha_{n}$ and $b$ in the representation of $f_{D,\lambda}$ above can be obtained by 
	\[
	\left(\begin{array}{c}
	\alpha_{1}\\
	\alpha_{2}\\
	.\\
	.\\
	.\\
	\alpha_{n}\\
	b
	\end{array}\right)=\left(\begin{array}{ccccccc}
	K_{11}+n\lambda & K_{12} & . & . & . & K_{1n} & 1\\
	K_{21} & K_{22}+n\lambda & . & . & . & K_{2n} & 1\\
	. & . & . &  &  & . & .\\
	. & . &  & . &  & . & .\\
	. & . &  &  & . & . & .\\
	K_{n1} & K_{n2} & . & . & . & K_{nn}+n\lambda & 1\\
	1 & 1 & . & . & . & 1 & 0
	\end{array}\right)^{-1}.\left(\begin{array}{c}
	v_{1}\\
	v_{2}\\
	.\\
	.\\
	.\\
	v_{n}\\
	0
	\end{array}\right),
	\]
	where $\mathbf{K}_{n\times n}$ is the kernel matrix with entries $\mathbf{K}_{ij}=k(Z_{i},Z_{j})$, and where $v_i=(1-\Delta_{i})/g(C_{i}|Z_{i})$, for $1\leq i,j \leq n$. That is, the KM-CSD decision function has a closed form. 
	
	\subsection{\label{positivity}Non-negative new modified loss function }
	Recall that our proposed loss function is
	\[
	L^{n}(D,(Z,C,\Delta,s))=\frac{(1-\Delta)2(C-s)}{g(C|Z)}+s^{2}\,.
	\]
	Note that this function is convex but not necessarily a loss function
	since it can obtain negative values.
	In order  to ensure positivity
	we add a constant term that does not depend on $f$, and so our loss
	becomes \[\widetilde{L^{n}}(D,(Z,C,\Delta,f(Z)))=\frac{(1-\Delta)2(C-f(Z))}{g(C|Z)}+\left(f(Z)\right)^{2}+a,
	\]
	where for a fixed dataset of length $n,$ the constant $a$ is $a=\underset{1\leq i\leq n}{\max}\left\{ (1-\Delta_{i})/\left(g(C_{i}|Z_{i})\right)^{2}\right\}.$
	Note that this additional term will not effect the optimization (since
	$\widetilde{L^{n}}$ is just a shift by a constant of $L^{n}$) and
	thus will be neglected hereafter.

	\subsection{\label{sec:Appendix}Proofs}
	
	\subsubsection{\label{sub:Proof-of-Theorem 1}Proof of Theorem~\ref{ theorem 1 - g known}}
	\begin{proof}
		Since $L^{n}(D,(Z,C,\Delta,s))=\tau^{-2}\left((1-\Delta)2(C-s)/g(C|Z)+s^{2}\right)$
		is convex, it implies that there exists a unique decision function \citep[see][Section 5.1]{steinwart_support_2008}.
		For all distributions $Q$ on $\mathcal{Z\times Y}$, we define the
		kernel machine decision function by $f_{Q,\lambda}=\underset{f\in\mathcal{H}}{\inf}\lambda\left\Vert f\right\Vert _{\mathcal{H}}^{2}+R_{L,Q}(f).$
		We note that for an RKHS $\mathcal{H}$ of a continuous kernel $k$
		with $\left\Vert k\right\Vert _{\infty}\le1$,
		
		\[
		\left\Vert f_{Q,\lambda}\right\Vert _{\mathcal{\infty}}\leq\left\Vert k\right\Vert _{\infty}\left\Vert f_{Q,\lambda}\right\Vert _{\mathcal{H}}\leq\left\Vert f_{Q,\lambda}\right\Vert _{\mathcal{H}}.
		\]
		Hence,
		
		\[
		\lambda\left\Vert f_{Q,\lambda}\right\Vert _{\mathcal{H}}^{2}\leq\lambda\left\Vert f_{Q,\lambda}\right\Vert _{\mathcal{H}}^{2}+R_{L,Q}(f_{Q,\lambda})=\underset{f\in\mathcal{H}}{\inf}\lambda\left\Vert f\right\Vert _{\mathcal{H}}^{2}+R_{L,Q}(f)\leq\lambda\left\Vert 0\right\Vert _{\mathcal{H}}^{2}+R_{L,Q}(0)=R_{L,Q}(0),
		\]

		Hence $\left\Vert f_{Q,\lambda}\right\Vert _{\mathcal{\infty}}\leq\left\Vert f_{Q,\lambda}\right\Vert _{\mathcal{H}}\leq\lambda^{-\nicefrac{1}{2}}\sqrt{R_{L,Q}(0)}$
		for all $f\in\mathcal{H}$. By Remark \ref{rem:loss bounds}, $L(y,0)\leq1$
		for all $y\in\mathcal{Y}$ and so we conclude that $R_{L,Q}(0)\leq1$
		and thus $\left\Vert f_{Q,\lambda}\right\Vert _{\mathcal{\infty}}\leq\left\Vert f_{Q,\lambda}\right\Vert _{\mathcal{H}}\leq \lambda^{-\nicefrac{1}{2}}$
		for all distributions $Q$ on $\mathcal{Z\times Y}$.
		
		Recall that the unit ball of $\mathcal{H}$ is denoted by $B_{H}$
		and its closure by $\overline{B_{H}}$; since$\left\Vert f_{P,\lambda}\right\Vert _{\mathcal{H}}\leq\lambda^{-\nicefrac{1}{2}}$
		we can write $f\in\lambda^{-\nicefrac{1}{2}}\overline{B_{H}}$. Since
		$\mathcal{Z}\subset\mathbb{\mathbb{R}}^{d}$ is compact, it implies
		that the $\left\Vert \cdot\right\Vert _{\infty}-\mathrm{closure}$
		$\overline{B_{H}}$ of the unit ball $B_{H}$ is compact in $\ell_{\infty}(\mathcal{Z})$
		\citep[see][Corollary 4.31]{steinwart_support_2008}.
		
		Denote by $R_{L^n,D}(f)$ the empirical risk with respect to the data-dependent loss $L^n$. Since $f_{D,\lambda}$ minimizes $\lambda\left\Vert f\right\Vert _{\mathcal{H}}^{2}+R_{L^n,D}(f)$,
		\[
		\lambda\left\Vert f_{D,\lambda}\right\Vert _{\mathcal{H}}^{2}+R_{L^n,D}(f_{D,\lambda})\leq\lambda\left\Vert f_{P,\lambda}\right\Vert _{\mathcal{H}}^{2}+R_{L^n,D}(f_{P,\lambda}).
		\]
		Recall that the approximation error is defined by $A_{2}(\lambda)=\underset{f\in\mathcal{H}}{\inf}\lambda\left\Vert f\right\Vert _{\mathcal{H}}^{2}+R_{L,P}(f)-R_{L,P}^{*}$,
		and thus, as in \citet[Eq. 6.18]{steinwart_support_2008},
		
		\begin{alignat*}{1}
		& \lambda\left\Vert f_{D,\lambda}\right\Vert _{\mathcal{H}}^{2}+R_{L,P}(f_{D,\lambda})-R_{L,P}^{*}-A_{2}(\lambda)\\
		= & \lambda\left\Vert f_{D,\lambda}\right\Vert _{\mathcal{H}}^{2}+R_{L,P}(f_{D,\lambda})-\lambda\left\Vert f_{P,\lambda}\right\Vert _{\mathcal{H}}^{2}-R_{L,P}(f_{P,\lambda})\\
		= & \lambda\left\Vert f_{D,\lambda}\right\Vert _{\mathcal{H}}^{2}+R_{L^n,D}(f_{D,\lambda})-R_{L^n,D}(f_{D,\lambda})+R_{L,P}(f_{D,\lambda})-\lambda\left\Vert f_{P,\lambda}\right\Vert _{\mathcal{H}}^{2}-R_{L,P}(f_{P,\lambda})\\
		\leq & \lambda\left\Vert f_{P,\lambda}\right\Vert _{\mathcal{H}}^{2}+R_{L^n,D}(f_{P,\lambda})-R_{L^n,D}(f_{D,\lambda})+R_{L,P}(f_{D,\lambda})-\lambda\left\Vert f_{P,\lambda}\right\Vert _{\mathcal{H}}^{2}-R_{L,P}(f_{P,\lambda})\\
		= & R_{L^n,D}(f_{P,\lambda})-R_{L^n,D}(f_{D,\lambda})+R_{L,P}(f_{D,\lambda})-R_{L,P}(f_{P,\lambda})\\
		\leq & 2\underset{\left\Vert f\right\Vert _{\mathcal{H}}\leq\lambda^{-\nicefrac{1}{2}}}{\sup}|R_{L,P}(f)-R_{L^n,D}(f)|.
		\end{alignat*}

		That is,
		
		\begin{equation}
		\lambda\left\Vert f_{D,\lambda}\right\Vert _{\mathcal{H}}^{2}+R_{L,P}(f_{D,\lambda})-R_{L,P}^{*}-A_{2}(\lambda)\leq2\underset{\left\Vert f\right\Vert _{\mathcal{H}}\leq\lambda^{-\nicefrac{1}{2}}}{\mathcal{\sup}}|R_{L,P}(f)-R_{L^n,D}(f)|\label{eq:first bound}
		\end{equation}

		Note that since $L$ is Lipschitz continuous, $|L(y,s)-L(y,s')|\leq c_{L}|s-s'|$
		for all $s,s'\in S$.
		
		From the discussion above, we are only interested in bounded functions
		$f\in\lambda^{-\nicefrac{1}{2}}\overline{B_{H}}.$
		
		Then for all $f\in\lambda^{-\nicefrac{1}{2}}\overline{B_{H}}$ we have
		
		\[
		|L(y,f(z))|\text{\ensuremath{\le}}|L(y,f(z))-L(y,0)|+L(y,0)\leq c_{L}|f(z)|+1\leq c_{L}\lambda^{-\nicefrac{1}{2}}+1\equiv B_{2}
		\]
		thus we obtain that for functions $f\in\lambda^{-\nicefrac{1}{2}}\overline{B_{H}}$,
		the loss $L(y,f(z))$ is bounded.
		
		For any $\epsilon>0,$ let $\mathcal{F_{\epsilon}}$ be an $\varepsilon-net$
		of $\lambda^{-\nicefrac{1}{2}}\overline{B_{H}}$. Since $\overline{B_{H}}$
		is compact, then the cardinality of the $\varepsilon-net$ is
		\[
		|\mathcal{F}\varepsilon|=N\left(\lambda^{-\nicefrac{1}{2}}B_{H},\left\Vert \cdot\right\Vert _{\infty},\epsilon\right)=N(B_{H},\left\Vert \cdot\right\Vert _{\infty},\sqrt{\lambda}\epsilon)<\infty.
		\]

		Thus for every $f\in\lambda^{-\nicefrac{1}{2}}\overline{B_{H}}$, there
		exists a function $h\in\mathcal{F_{\varepsilon}}$ with $\left\Vert f-h\right\Vert \le\varepsilon$,
		and thus

		\begin{equation}
		\begin{alignedat}{5}
		&|R_{L,P}(f)-R_{L^n,D}(f)|\\
		&\leq|R_{L,P}(f)-R_{L,P}(h)|+|R_{L,P}(h)-R_{L^n,D}(h)|+|R_{L^n,D}(h)-R_{L^n,D}(f)|\\
		&\equiv A_{n}+B_{n}+C_{n}\label{eq:abc}
		\end{alignedat}
		\end{equation}

		First we will bound $C_{n}$;
		
		\begin{alignat*}{1}
		C_{n}\equiv & \left|R_{L^n,D}(h)-R_{L^n,D}(f)\right|\\
		\leq & \left|\frac{1}{n}\sum_{i=1}^n \left[\frac{(1-\Delta_{i})\ell(C_{i},h(Z_{i}))}{g(C_{i}|Z_{i})}\right]-\frac{1}{n}\sum_{i=1}^n \left[\frac{(1-\Delta_{i})\ell(C_{i},f(Z_{i}))}{g(C_{i}|Z_{i})}\right]\right|\\
		+ & \left|\frac{1}{n}\sum_{i=1}^n [L(0,h(Z_{i}))]-\frac{1}{n}\sum_{i=1}^n [L(0,f(Z_{i}))]\right|\\
		\equiv & C_{n,1}+C_{n,2},
		\end{alignat*}

		where
		\begin{alignat*}{1}
		C_{n,1}\equiv & \left|\frac{1}{n}\sum_{i=1}^n \left[\frac{(1-\Delta_{i})\ell(C_{i},h(Z_{i}))}{g(C_{i}|Z_{i})}-\frac{(1-\Delta_{i})\ell(C_{i},f(Z_{i}))}{g(C_{i}|Z_{i})}\right]\right|\\
		= & \left|\frac{1}{n}\sum_{i=1}^n \left[\frac{(1-\Delta_{i})}{g(C_{i}|Z_{i})}\left(\ell(C_{i},h(Z_{i}))-\ell(C_{i},f(Z_{i}))\right)\right]\right|\\
		\leq & \left|\frac{1}{n}\sum_{i=1}^n \left[\frac{1}{g(C_{i}|Z_{i})}\left(\ell(C_{i},h(Z_{i}))-\ell(C_{i},f(Z_{i}))\right)\right]\right|\\
		\leq & \frac{1}{2\kappa}\left|\frac{1}{n}\sum_{i=1}^n \left[\ell(C_{i},h(Z_{i}))-\ell(C_{i},f(Z_{i}))\right]\right|\leq\frac{1}{2n\kappa}\sum_{i=1}^n |\ell(C_{i},h(Z_{i}))-\ell(C_{i},f(Z_{i}))|\\
		\leq & \frac{1}{2n\kappa}\sum_{i=1}^n c_{l}|h(Z_{i})-f(Z_{i})|\leq\frac{1}{2n\kappa}\sum_{i=1}^n c_{l}\varepsilon=\frac{c_{l}\varepsilon}{2\kappa},
		\end{alignat*}

		and where
		\begin{alignat*}{1}
		C_{n,2}\equiv & \left|\frac{1}{n}\sum_{i=1}^n [L(0,h(Z_{i}))-L(0,f(Z_{i}))]\right|\\
		\leq & \frac{1}{n}\sum_{i=1}^n \left|L(0,h(Z_{i}))-L(0,f(Z_{i}))\right|\\
		\leq & \frac{1}{n}\sum_{i=1}^n c_{L}|h(Z_{i})-f(Z_{i})|\leq\frac{1}{n}\sum_{i=1}^n \left[c_{L}\varepsilon\right]=c_{L}\varepsilon
		\end{alignat*}

		So we were able to bound $C_{n}$ by $c_{l}\varepsilon/2\kappa+c_{L}\varepsilon$.
		
		Similarly, using to the property that $E\left[\alpha\right]=\alpha$
		for any constant $\alpha$, it can be shown that $A_{n}\leq c_{l}\varepsilon/2\kappa+c_{L}\varepsilon$.
		
		As an interim summary, we showed that
		
		\begin{equation}
		\underset{f\in\lambda^{-\nicefrac{1}{2}}B_{H}}{\sup}|R_{L,P}(f)-R_{L^n,D}(f)|\leq\underset{h\in\mathcal{F_{\varepsilon}}}{\sup}\underset{=B_{n}}{\underbrace{|R_{L,P}(h)-R_{L^n,D}(h)|}}+\frac{1}{\kappa}c_{l}\varepsilon+2c_{L}\varepsilon.\label{eq:second bound}
		\end{equation}

		Recall that the loss $L(y,f(z))$ is bounded by $B_{2}$ and that
		by Remark \ref{rem:loss bounds}, $\ell(y,s)\leq B_{1}$.
		
		We note that
		
		\begin{alignat*}{2}
		\frac{(1-\Delta)\ell(C,h(Z))}{g(C|Z)}+L(0,h(Z)) & \leq\frac{\ell(C,h(Z))}{g(C|Z)}+L(0,h(Z))\leq & \frac{B_{1}}{2\kappa}+B_{2}\equiv B
		\end{alignat*}

		Combining this with equation (\ref{eq:first bound}), we obtain that
		
		\begin{alignat*}{2}
		Pr & \left(\lambda\left\Vert f_{D,\lambda}\right\Vert _{\mathcal{H}}^{2}+R_{L,P}(f_{D,\lambda})-R_{L,P}^{*}-A_{2}(\lambda)\geq B\sqrt{\frac{2\eta}{n}}+\frac{2c_{l}\varepsilon}{\kappa}+4c_{L}\varepsilon\right)\\
		\leq & Pr\left(2\underset{\left\Vert f\right\Vert _{\mathcal{H}}\leq\lambda^{-\nicefrac{1}{2}}}{sup}|R_{L,P}(f)-R_{L^n,D}(f)|\geq B\sqrt{\frac{2\eta}{n}}+\frac{2c_{l}\varepsilon}{\kappa}+4c_{L}\varepsilon\right) & \,(\mathrm{by\, eq.\,}\ref{eq:first bound})\\
		\leq & Pr\left(2\left(\underset{h\in\mathcal{F_{\varepsilon}}}{sup}|R_{L,P}(h)-R_{L^n,D}(h)|+\frac{1}{\kappa}c_{l}\varepsilon+2c_{L}\varepsilon\right)\geq B\sqrt{\frac{2\eta}{n}}+\frac{2c_{l}\varepsilon}{\kappa}+4c_{L}\varepsilon\right) & \,(\mathrm{by\, eq.\,}\ref{eq:second bound})\\
		= & Pr\left(2\left(\underset{h\in\mathcal{F_{\varepsilon}}}{\sup}B_{n}+\frac{1}{\kappa}c_{l}\varepsilon+2c_{L}\varepsilon\right)\geq B\sqrt{\frac{2\eta}{n}}+\frac{2c_{l}\varepsilon}{\kappa}+4c_{L}\varepsilon\right) & \,\\
		= & Pr\left(\underset{h\in\mathcal{F_{\varepsilon}}}{\sup}B_{n}\geq B\sqrt{\frac{\eta}{2n}}\right)=Pr\left(\underset{h\in\mathcal{F_{\varepsilon}}}{\sup}\left|R_{L,P}(h)-R_{L^n,D}(h)\right|\geq B\sqrt{\frac{\eta}{2n}}\right).
		\end{alignat*}

		By the union bound, the last expression is bounded by
		\begin{alignat*}{1}
		& \underset{h\in\mathcal{F_{\varepsilon}}}{\sum}Pr\left(|R_{L,P}(h)-R_{L^n,D}(h)|\geq B\sqrt{\frac{\eta}{2n}}\right),
		\end{alignat*}

		which can then be bounded again by $2|\mathcal{F}_{\varepsilon}|\exp(-\eta)$,
		using Hoeffding's inequality \citep[Theorem 6.10]{steinwart_support_2008};
		where $\mathcal{F_{\epsilon}}$ is an $\varepsilon$-net of $\lambda^{-\nicefrac{1}{2}}\overline{B_{H}}$
		with cardinality
		\[
		|\mathcal{F}\varepsilon|=N\left(\lambda^{-\nicefrac{1}{2}}B_{H},\left\Vert \cdot\right\Vert _{\infty},\epsilon\right)<\infty.
		\]

		Define $\eta=\log(2|\mathcal{F_{\varepsilon}}|)+\theta$, then
		\begin{align*}
		& Pr\left(\lambda\left\Vert f_{D,\lambda}\right\Vert _{\mathcal{H}}^{2}+R_{L,P}(f_{D,\lambda})-R_{L,P}^{*}-A_{2}(\lambda)\geq B\sqrt{\frac{2(\log(2|\mathcal{F_{\varepsilon}}|)+\theta)}{n}}+\frac{2c_{l}\varepsilon}{\kappa}+4c_{L}\varepsilon\right)\\
		&  \leq \exp(-\theta),
		\end{align*}

		which concludes the proof.
	\end{proof}
	
	\subsubsection{\label{sub:Proof-of-cor 1}Proof of Corollary~\ref{cor - consistency}}
	\begin{proof}
	\label{proof:cor - consistency} 
	In Theorem~\ref{ theorem 1 - g known} we showed that 
	\[
	\lambda\left\Vert f_{D,\lambda}\right\Vert _{\mathcal{H}}^{2}+R_{L,P}(f_{D,\lambda})-R_{L,P}^{*}-A_{2}(\lambda)\leq B\sqrt{\frac{2\log\left(2N(\sqrt{\frac{1}{\lambda}}B_{H},\left\Vert \cdot\right\Vert _{\infty},\epsilon)\right)+2\theta}{n}}+\frac{2c_{l}\varepsilon}{\kappa}+4c_{L}\varepsilon,
	\]

	with probability not less than $1-\exp^{-\theta}$.
	
	Choose $\lambda=\lambda_{n};$ from Assumption~(A\ref{as:A5}) together with
	Lemma~5.15 of \citet[5.15]{steinwart_support_2008}, $A_{2}(\lambda_{n})$
	converges to zero as $n$ goes to infinity. 
	By the assumption $\log\left(N(B_{H},\left\Vert \cdot\right\Vert _{\infty},\epsilon)\right)\leq a\epsilon^{-2p}$, we have that
	\[
	\begin{alignedat}{1} & \log\left(2N(B_{H},\left\Vert \cdot\right\Vert _{\infty},\sqrt{\lambda}\epsilon)\right)\\
	&=\log(2)+\log\left(N(B_{H},\left\Vert \cdot\right\Vert _{\infty},\sqrt{\lambda}\epsilon)\right)\\
	&\leq  \log(2)+a\left(\sqrt{\lambda}\epsilon\right)^{-2p}\\
	&\leq1+a\left(\sqrt{\lambda}\epsilon\right)^{-2p}.
	\end{alignedat}
	\]
	
	Choose $\epsilon=\left(\frac{p}{2}\right)^{\frac{1}{1+p}}\left(\frac{2a}{n}\right)^{\frac{1}{2+2p}}\frac{1}{\sqrt{\lambda}}$ and recall that $a\geq1$.
	Then for $n\geq\frac{p^{2}a}{2}$ we have
	\[
	\begin{alignedat}{1}&\log\left(2N(B_{H},\left\Vert \cdot\right\Vert _{\infty},\sqrt{\lambda}\epsilon)\right)\\
	&\leq1+a\left(\sqrt{\lambda}\epsilon\right)^{-2p}\\ 
	&=  1+a\left(\left(\frac{p}{2}\right)^{\frac{1}{1+p}}\left(\frac{2a}{n}\right)^{\frac{1}{2+2p}}\right)^{-2p}\\& \leq2a\left(\left(\frac{p}{2}\right)^{\frac{1}{1+p}}\left(\frac{2a}{n}\right)^{\frac{1}{2+2p}}\right)^{-2p}\\.
	\end{alignedat}
	\]
	Recall that $B$ is defined by $B=\frac{B_{1}}{2\kappa}+c_{L}\lambda^{-\frac{1}{2}}+1$. Hence, from the assumption on the covering number we have that
	\[
	B\sqrt{\frac{2\log\left(2N(\sqrt{\frac{1}{\lambda}}B_{H},\left\Vert \cdot\right\Vert _{\infty},\epsilon)\right)+2\theta}{n}}\leq \left(\frac{B_{1}}{2\kappa}+c_{L}\lambda^{-\frac{1}{2}}+1\right)\sqrt{\frac{ 4a\left(\left(\frac{p}{2}\right)^{\frac{1}{1+p}}\left(\frac{2a}{n}\right)^{\frac{1}{2+2p}}\right)^{-2p}+2\theta}{n}}
	\]
	and since $\lambda_{n}^{1+p}n\underset{n\rightarrow\infty}{\rightarrow}\infty$, the right hand side of this converges to 0 as $n\rightarrow\infty$.
	Finally, from the choice of $\epsilon$, it follows that both $\frac{2c_{l}\varepsilon}{\kappa}$
	and $4c_{L}\varepsilon$ converge to 0 as $n\rightarrow\infty$. Hence
	for every fixed $\theta,$
	
	\[
	\lambda_{n}\left\Vert f_{D,\lambda_{n}}\right\Vert _{\mathcal{H}}^{2}+R_{L,P}(f_{D,\lambda_{n}})-R_{L,P}^{*}\leq A_{2}(\lambda_{n})+B\sqrt{\frac{2\log\left(2N(\sqrt{\frac{1}{\lambda_{n}}}B_{H},\left\Vert \cdot\right\Vert _{\infty},\epsilon)\right)+2\theta}{n}}+\frac{2c_{l}\varepsilon}{\kappa}+4c_{L}\varepsilon
	\]
	with probability not less than $1-\exp(-\theta)$. The right hand side
	of this converges to 0 as $n\rightarrow\infty$, which implies consistency
	\citep[Lemma 6.5]{steinwart_support_2008}. Since this holds for all
	probability measures $P\in\mathcal{P}$, we obtain $\mathcal{P}$-universal
	consistency.
\end{proof}
	\subsubsection{\label{sub:Proof-of-Lemma 1}Proof of Lemma~\ref{lemma on density}}
	\begin{proof}
		For the sake of completeness, we develop here a finite sample bound on the difference between the kernel density estimator $\hat{g}$ and the true density $g$. While asymptotic results for kernel density estimators are well known in the literature (see, for example, \citealt{silverman1978}), finite sample bounds were not previously studied. In order to develop our bound, we incorporate Bernstein's inequality in our analysis as described below.

		Note that
		\[
		\begin{alignedat}{1} & \frac{1}{n}\sum_{i=1}^n \left|\hat{g}(C_{i})-g(C_{i})\right|\leq\\
		\leq & \frac{1}{n}\sum_{i=1}^n \left|\hat{g}(C_{i})-E\left[\hat{g}(C_{i})\right]\right|+\frac{1}{n}\sum_{i=1}^n \left|E\left[\hat{g}(C_{i})\right]-g(C_{i})\right|\equiv A+B
		\end{alignedat}
		\]

		As in \citet[Proposition 1.1]{tsybakov_introduction_2008}, for any $c_{0}\in\mathcal{Y}$, define
		\[\eta_{i}(c_{0})=K_m\left(\frac{C_{i}-c_{0}}{h}\right)-E_{g}\left[K_m\left(\frac{C_{i}-c_{0}}{h}\right)\right].
		\]
		Then $\eta_{i}(c_{0})$, for $i=1,...,n$ are i.i.d. random variables
		with zero mean and with variance:
		\begin{align*}
		\mathrm{Var}\left[\eta_{i}(c_{0})\right]&=E_{g}\left[\left(\eta_{i}(c_{0})\right)^{2}\right]=E_{g}\left[\left(K_m\left(\frac{C_{i}-c_{0}}{h}\right)-E_{g}\left[K_m\left(\frac{C_{i}-c_{0}}{h}\right)\right]\right)^{2}\right]\leq E_{g}\left[K_m^{2}\left(\frac{C_{i}-c_{0}}{h}\right)\right]
		\\
		&=	 \int_{u}{K_m^{2}\left(\frac{u-c_{0}}{h}\right)g(u)du}\leq g_{max}\int_{u}K_m^{2}\left(\frac{u-c_{0}}{h}\right)du\stackrel{}{=g_{max}h\int_{v}K_m^{2}\left(v\right)dv=D_{1}h}
		\end{align*}
		where the equality before last follows from change of variables and
		where $D_{1}=g_{max}\int_{v}K_m^{2}\left(v\right)dv$. Thus
		$\mathrm{Var}(\hat{g}(c_{0}))=E_{g}\left[\left(\frac{1}{nh}\sum_{i=1}^n \eta_{i}(c_{0})\right)^{2}\right]=\frac{1}{nh^{2}}E_{g}\left[\eta_{1}^{2}(c_{0})\right]\leq\frac{D_{1}h}{nh^{2}}=\frac{D_{1}}{nh}.$
		
		Note that $\mathrm{Var}(\left|\hat{g}(c_{0})-E\left[\hat{g}(c_{0})\right|\right])=E\left[\left(\left|\hat{g}(c_{0})-E\left[\hat{g}(c_{0})\right]\right|\right)^{2}\right]=\mathrm{Var}(\hat{g}(c_{0})-E\left[\hat{g}(c_{0})\right])=\mathrm{Var}(\hat{g}(c_{0}))$.
		Hence $\mathrm{Var}(\left|\hat{g}(c_{0})-E\left[\hat{g}(c_{0})\right|\right])=\mathrm{Var}(\hat{g}(c_{0}))\leq \frac{D_{1}}{nh}$. Using Bernstein's inequality, for any $\theta>0$ we have
		
		\[
		Pr\left(A>\sqrt{\frac{2D_{1}\theta}{n^{2}h}}+\frac{2g_{max}\theta}{3n}\right)\equiv Pr\left(\frac{1}{n}\sum_{i=1}^n \left|\hat{g}(C_{i})-E\left[\hat{g}(C_{i})\right]\right|>\sqrt{\frac{2D_{1}\theta}{n^{2}h}}+\frac{2g_{max}\theta}{3n}\right)\leq \exp(-\theta)
		\]

		For the second term, as in \citet[Proposition 1.2]{tsybakov_introduction_2008},
		we have that
		\[
		B\equiv\frac{1}{n}\sum_{i=1}^n \left|E\left[\hat{g}(C_{i})\right]-g(C_{i})\right|\leq D_{2}h^{\beta}
		\]
		where $D_{2}=\mathcal{L}\left|\pi\right|^{\beta-m}/m!
		\int_{-\infty}^{\infty}{\left|K_m\left(v\right)\right|}\left|v\right|^{\beta}dv<\infty$,
		and for some $\pi\in[0,1].$

		In conclusion, we showed that
		
		\[
		\begin{alignedat}{1} & Pr\left(\frac{1}{n}\sum_{i=1}^n \left|\hat{g}(C_{i})-g(C_{i})\right|>\sqrt{\frac{2D_{1}\theta}{n^{2}h}}+\frac{2g_{max}\theta}{3n}+D_{2}\cdot h^{\beta}\right)\\
		\leq & Pr\left(\frac{1}{n}\sum_{i=1}^n \left|\hat{g}(C_{i})-E\left[\hat{g}(C_{i})\right]\right|+\frac{1}{n}\sum_{i=1}^n \left|E\left[\hat{g}(C_{i})\right]-g(C_{i})\right|>\sqrt{\frac{2D_{1}\theta}{n^{2}h}}+\frac{2g_{max}\theta}{3n}+D_{2}\cdot h^{\beta}\right)\\
		\leq & Pr\left(\frac{1}{n}\sum_{i=1}^n \left|\hat{g}(C_{i})-E\left[\hat{g}(C_{i})\right]\right|+D_{2}\cdot h^{\beta}>\sqrt{\frac{2D_{1}\theta}{n^{2}h}}+\frac{2g_{max}\theta}{3n}+D_{2}\cdot h^{\beta}\right)\\
		= & Pr\left(\frac{1}{n}\sum_{i=1}^n \left|\hat{g}(C_{i})-E\left[\hat{g}(C_{i})\right]\right|>\sqrt{\frac{2D_{1}\theta}{n^{2}h}}+\frac{2g_{max}\theta}{3n}\right)\leq \exp(-\theta)
		\end{alignedat}
		\]
		where $h$ is the bandwidth.
	\end{proof}
	
	\subsubsection{\label{sub:Proof-of-Theorem 2}Proof of Theorem~\ref{theorem g unknown}}
	\begin{proof}
		Note that the proof of this theorem is similar to the proof of of
		Theorem~\ref{ theorem 1 - g known} and thus we will only discuss
		the parts of the proof where they differ. As in Theorem~\ref{ theorem 1 - g known},
		equation \ref{eq:abc},
		\[
		\lambda\left\Vert f_{D,\lambda}\right\Vert _{\mathcal{H}}^{2}+R_{L,P}(f_{D,\lambda})-R_{L,P}^{*}-A_{2}(\lambda)\leq2\left(A_{n}+B_{n}+C_{n}\right)
		\]

		where
		\[
		A_{n}\equiv|R_{L,P}(f)-R_{L,P}(v)|,\,\, B_{n}\equiv|R_{L,P}(v)-R_{L^n,D}(v)|,\,\, \mathrm{and\, where}\,\, C_{n}\equiv|R_{L^n,D}(v)-R_{L^n,D}(f)|,
		\]

		Since $A_{n}$ does not depend on the data-set $D,$ the same bound
		holds as in the proof of Theorem~\ref{ theorem 1 - g known}, that
		is, $A_{n}\leq c_{l}\varepsilon/2\kappa+c_{L}\varepsilon$.
		
		We bound $C_{n}$ as follows:
		
		\begin{alignat*}{1}
		C_{n}\equiv & \left|R_{L^n,D}(v)-R_{L^n,D}(f)\right|\\
		\leq & \left|\frac{1}{n}\sum_{i=1}^n \left[\frac{(1-\Delta_{i})\ell(C_{i},v(Z_{i}))}{\hat{g}(C_{i})}\right]-\frac{1}{n}\sum_{i=1}^n \left[\frac{(1-\Delta_{i})\ell(C_{i},f(Z_{i}))}{\hat{g}(C_{i})}\right]\right|\\
		+ & \left|\frac{1}{n}\sum_{i=1}^n [L(0,v(Z_{i}))]-\frac{1}{n}\sum_{i=1}^n [L(0,f(Z_{i}))]\right|\\
		\equiv & C_{n,1}+C_{n,2}
		\end{alignat*}

		Using the same arguments as in Theorem~\ref{ theorem 1 - g known},
		we can bound $C_{n}$ by $c_{l}\varepsilon/\kappa+c_{L}\varepsilon$.
		Note that the only difference is in the denominator of $C_{n,1}$
		since $g^{-1}\leq (2\kappa)^{-1}$ and $\hat{g}^{-1}\leq\kappa^{-1}$.
		
		Recall that the loss $L(y,f(z))$ is bounded by $B_{2}$. Define $R_{L^n,D,g}(v)$
		by
		\[
		R_{L^n,D,g}(v)=\frac{1}{n}\sum_{i=1}^n \left[\frac{(1-\Delta_{i})\ell(C_{i},v(Z_{i}))}{g(C_{i})}\right]+\frac{1}{n}\sum_{i=1}^n [L(0,v(Z_{i}))].
		\]
		In other words, $R_{L^n,D,g}(v)$ is the empirical risk with the true
		censoring density function $g$.
		
		We bound $B_{n}$ as follows
		
		\begin{alignat*}{1}
		B_{n}= & |R_{L,P}(v)-R_{L^n,D}(v)|\\
		\leq & \left|R_{L,P}(v)-R_{L^n,D,g}(v)\right|+\left|R_{L^n,D,g}(v)-R_{L^n,D}(v)\right|\equiv B_{n,1}+B_{n,2}
		\end{alignat*}

		where
		
		\begin{alignat*}{2}
		\frac{(1-\Delta)\ell(C,v(Z))}{g(C)}+L(0,v(Z)) & \leq\frac{\ell(C,v(Z))}{g(C)}+L(0,v(Z))\leq & \frac{B_{1}}{2\kappa}+B_{2}=B
		\end{alignat*}

		and where
		
		\begin{alignat*}{1}
		B_{n,2}= & \left|R_{L^n,D,g}(v)-R_{L^n,D}(v)\right|=\\
		= & \left|\frac{1}{n}\sum_{i=1}^n \left[\frac{(1-\Delta_{i})\ell(C_{i},v(Z_{i}))}{g(C_{i})}\right]-\frac{1}{n}\sum_{i=1}^n \left[\frac{(1-\Delta_{i})\ell(C_{i},v(Z_{i}))}{\hat{g}(C_{i})}\right]\right|\\
		= & \left|\frac{1}{n}\sum_{i=1}^n \left[(1-\Delta_{i})\ell(C_{i},v(Z_{i}))\left(\frac{1}{g(C_{i})}-\frac{1}{\hat{g}(C_{i})}\right)\right]\right|\\
		\leq & \frac{1}{n}\sum_{i=1}^n \left[\left|\ell(C_{i},v(Z_{i}))\left(\frac{1}{g(C_{i})}-\frac{1}{\hat{g}(C_{i})}\right)\right|\right]\\
		= & \frac{B_{1}}{n}\sum_{i=1}^n \left[\left|\frac{\hat{g}(C_{i})-g(C_{i})}{g(C_{i})\hat{g}(C_{i})}\right|\right]\leq\frac{B_{1}}{2\kappa^{2}n}\sum_{i=1}^n \left[\left|\hat{g}(C_{i})-g(C_{i})\right|\right].
		\end{alignat*}

		Note that these inequalities hold for all functions $v\in\mathcal{F_{\varepsilon}}\subseteq\lambda^{-\nicefrac{1}{2}}B_{H}$.
		We would like to bound the last expression using Lemma~\ref{lemma on density}.
		Let \[\eta=\frac{B_{1}}{2\kappa^{2}}\left(\sqrt{\frac{2D_{1}\theta}{n^{2}h}}+\frac{2g_{max}\theta}{3n}+D_{2}\cdot h^{\beta}\right),
		\]
		then by Lemma~\ref{lemma on density}
		
		\[
		\begin{alignedat}{1} & Pr(B_{n,2}>\eta)\leq Pr\left(\frac{B_{1}}{2\kappa^{2}n}\sum_{i=1}^n \left[\left|\hat{g}(C_{i})-g(C_{i})\right|\right]>\eta\right)\\
		= & Pr\left(\frac{B_{1}}{2\kappa^{2}n}\sum_{i=1}^n \left[\left|\hat{g}(C_{i})-g(C_{i})\right|\right]>\frac{B_{1}}{2\kappa^{2}}\left(\sqrt{\frac{2D_{1}\theta}{n^{2}h}}+\frac{2g_{max}\theta}{3n}+D_{2}\cdot h^{\beta}\right)\right)\\
		= & Pr\left(\frac{1}{n}\sum_{i=1}^n \left[\left|\hat{g}(C_{i})-g(C_{i})\right|\right]>\left(\sqrt{\frac{2D_{1}\theta}{n^{2}h}}+\frac{2g_{max}\theta}{3n}+D_{2}\cdot h^{\beta}\right)\right)\\
		\leq & \exp(-\theta).
		\end{alignedat}
		\]

		We need to bound the term $B_{n,1}(v)\equiv\left|R_{L,P}(v)-R_{L^n,D,g}(v)\right|$.
		By the union bound, for all $\mu>0$

		\begin{align*}
		Pr\left(\underset{v\in\mathcal{F_{\varepsilon}}}{\sup}B_{n,1}(v)\geq B\sqrt{\frac{\mu}{2n}}\right)&=Pr\left(\underset{v\in\mathcal{F_{\varepsilon}}}{\sup}\left|R_{L,P}(v)-R_{L^n,D,g}(v)\right|
		\geq B\sqrt{\frac{\mu}{2n}}\right)
		\\
		&\leq\underset{v\in\mathcal{F_{\varepsilon}}}{\sum}Pr\left(|R_{L,P}(v)-R_{L^n,D,g}(v)|\geq B\sqrt{\frac{\mu}{2n}}\right).
		\end{align*}

		We showed that $(1-\Delta)\ell(C,v(Z))/g(C)+L(0,v(Z))\leq B$. Note also that $R_{L,P}(v)=R_{L^{n},P}(v)=R_{L^{n},P,g}(v)$; That is, $R_{L,P}(v)$ is the expectation of $R_{L^{n},D,g}(v)$.
		Hence by Hoeffding's inequality, the last term can then be bounded
		again by $2|\mathcal{F}_{\varepsilon}|\exp(-\mu)$, where $\mathcal{F_{\epsilon}}$
		is an $\varepsilon$-net of $\lambda^{-\nicefrac{1}{2}}\overline{B_{H}}$
		with cardinality
		\[
		|\mathcal{F}\varepsilon|=N\left(\lambda^{-\nicefrac{1}{2}}B_{H},\left\Vert \cdot\right\Vert _{\infty},\epsilon\right)<\infty.
		\]

		Define $\mu=\log(2|\mathcal{F_{\varepsilon}}|)+\theta$, then
		\[
		Pr\left(\underset{v\in\mathcal{F_{\varepsilon}}}{\sup}B_{n,1}(v)\geq B\sqrt{\frac{ln(2|\mathcal{F_{\varepsilon}}|)+\theta}{2n}}\right)\leq \exp(-\theta)
		\]
		In conclusion we have that
		
		\begin{alignat*}{1}
		Pr & \left(\lambda\left\Vert f_{D,\lambda}\right\Vert _{\mathcal{H}}^{2}+R_{L,P}(f_{D,\lambda})-R_{L,P}^{*}-A_{2}(\lambda)\geq B\sqrt{\frac{2\mu}{n}}+\frac{3c_{l}\varepsilon}{\kappa}+4c_{L}\varepsilon+2\eta\right)\\
		\leq & Pr\left(2\underset{\left\Vert f\right\Vert _{\mathcal{H}}\leq\lambda^{-\nicefrac{1}{2}}}{sup}|R_{L,P}(f)-R_{L^n,D}(f)|\geq B\sqrt{\frac{2\mu}{n}}+\frac{3c_{l}\varepsilon}{\kappa}+4c_{L}\varepsilon+2\eta\right)\\
		\leq & Pr\left(2\left(\underset{v\in\mathcal{F_{\varepsilon}}}{sup}|R_{L,P}(v)-R_{L^n,D}(v)|+\frac{3}{2\kappa}c_{l}\varepsilon+2c_{L}\varepsilon\right)\geq B\sqrt{\frac{2\mu}{n}}+\frac{3c_{l}\varepsilon}{\kappa}+4c_{L}\varepsilon+2\eta\right)\\
		\leq & Pr\left(2\left(\underset{v\in\mathcal{F_{\varepsilon}}}{\sup}B_{n,1}(v)+B_{n,2}(v)\right)\geq B\sqrt{\frac{2\mu}{n}}+2\eta\right)\\
		\leq & Pr\left(\underset{v\in\mathcal{F_{\varepsilon}}}{\sup}B_{n,1}(v)+B_{n,2}(v)\geq B\sqrt{\frac{\mu}{2n}}+\eta\right)\\
		\leq & Pr\left(\underset{v\in\mathcal{F_{\varepsilon}}}{\sup}B_{n,1}\geq B\sqrt{\frac{ln(2|\mathcal{F_{\varepsilon}}|)+\theta}{2n}}\right)+Pr\left(\underset{v\in\mathcal{F_{\varepsilon}}}{\sup}B_{n,2}(v)\geq\eta\right)\\
		\leq & \exp(-\theta)+\exp(-\theta)=2\exp(-\theta)
		\end{alignat*}

		and the result follows.
	\end{proof}

\subsubsection{\label{sub:Proof-of-cor 2}Proof of Corollary~\ref{consistency 2}}
\begin{proof}
	\label{proof consistency2} 
	Note that the only difference between Corollary~\ref{consistency 2} and Corollary~\ref{cor - consistency} is in the term $2\eta$. Recall that $\eta$ is defined by $\eta\equiv\frac{B_{1}}{2\kappa^{2}}\left(\sqrt{\frac{2D_{1}\theta}{n^{2}h}}+\frac{2g_{max}\theta}{3n}+D_{2}\cdot h^{\beta}\right)$. Choose $h$ such that $h\underset{n\rightarrow\infty}{\rightarrow}0$ and that $h^{0.5}n\underset{n\rightarrow\infty}{\rightarrow}\infty$. Then $\eta\underset{n\rightarrow\infty}{\rightarrow}0$. 
	Choose $\lambda=\lambda_{n}$ and $\epsilon=\left(\frac{p}{2}\right)^{\frac{1}{1+p}}\left(\frac{2a}{n}\right)^{\frac{1}{2+2p}}\frac{1}{\sqrt{\lambda}}$.
	Then as in Corollary~\ref{cor - consistency}, all other terms converge to zero as ${n\rightarrow\infty}$ which implies consistency \citep[Lemma 6.5]{steinwart_support_2008}. Since this holds for all
	probability measures $P\in\mathcal{P}$, we obtain $\mathcal{P}$-universal
	consistency.
\end{proof}

\subsection{\label{Learning rates}Learning Rates}

In this subsection we derive learning rates for cases I and II.
\begin{defn}
	\label{def learning rate}A learning method is said to learn with
	rate $\epsilon_{n}\subset(0,1]$ that converges to zero if for all
	$n\geq1$ and all $\tau\in(0,1]$, $Pr\left(R_{L,P}(f_{D})-R_{L,P}^{*}\leq c_{P}c_{\tau}\epsilon_{n}\right)\geq1-\tau$
	, where $c_{\tau}$ and $c_{P}$ are constants such that $c_{\tau}\in[1,\infty)$
	and $c_{P}>0$.\end{defn}
	
	We demonstrate how to derive learning rates from the same oracle inequalities used for the consistency proofs. While faster learning rates can be achieved under further assumptions in a similar manner, they further complicate the calculations and are beyond the scope of this paper.
	
\begin{thm}
	\label{theorem learning rate}Assume that (A\ref{as:A1})-(A\ref{as:A4}) hold. Choose $0<\lambda_{n}<1$ and assume that there exist constants $a\geq1,\, p>0$ such that $\log\left(N(B_{H},\left\Vert \cdot\right\Vert _{\infty},\epsilon)\right)\leq a\epsilon^{-2p}$.
	Additionally, assume that there exist constants $c>0,\,\gamma\in(0,1]$
	such that $A_{2}(\lambda)\leq c\lambda^{\gamma}$. Then
	
	(i) If $g$ is known, the learning rate is given by $n^{-\frac{\gamma}{(1+p)(2\gamma+1)}}$.
	
	(ii) If $g$ is not known and the setup of Theorem~\ref{theorem g unknown}
	holds, then the leraning rate is given by $n^{-\min\left(\frac{\gamma}{(1+p)(2\gamma+1)},\frac{2\beta}{2\beta+1}\right)}$.
\end{thm}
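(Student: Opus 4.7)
The plan is to plug the polynomial approximation error and covering number assumptions into the finite sample bounds of Theorem~\ref{ theorem 1 - g known} and Theorem~\ref{theorem g unknown} respectively, and then to optimize over the free parameters $\epsilon$ (the net width) and $\lambda$ (the regularization level).

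First I would treat case (i). Fix $\theta=-\log\tau$ so that the event in Theorem~\ref{ theorem 1 - g known} has probability at least $1-\tau$. Using the elementary scaling identity $N(\lambda^{-1/2}B_H,\|\cdot\|_\infty,\varepsilon)=N(B_H,\|\cdot\|_\infty,\lambda^{1/2}\varepsilon)$ and the hypothesis $\log N(B_H,\|\cdot\|_\infty,\varepsilon)\le a\varepsilon^{-2p}$, the covering number log is bounded by $a\lambda^{-p}\varepsilon^{-2p}$. Combined with $B\le c_L\lambda^{-1/2}+1+B_1/(2K)\lesssim\lambda^{-1/2}$ and $A_2(\lambda)\le c\lambda^\gamma$, Theorem~\ref{ theorem 1 - g known} yields, up to constants depending on $\theta$,
\begin{equation*}
R_{L,P}(f_{D,\lambda})-R_{L,P}^{*}\lesssim\lambda^{\gamma}+\lambda^{-\frac{1+p}{2}}\varepsilon^{-p}n^{-1/2}+\varepsilon.
\end{equation*}
Balancing the last two terms by setting $\varepsilon^{1+p}\asymp\lambda^{-(1+p)/2}n^{-1/2}$, i.e.\ $\varepsilon\asymp\lambda^{-1/2}n^{-1/(2(1+p))}$, collapses them to a single term of order $\lambda^{-1/2}n^{-1/(2(1+p))}$. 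Balancing this against $\lambda^{\gamma}$ forces $\lambda^{\gamma+1/2}\asymp n^{-1/(2(1+p))}$, that is $\lambda_n\asymp n^{-1/((1+p)(2\gamma+1))}$, at which point both terms equal $n^{-\gamma/((1+p)(2\gamma+1))}$, which is the claimed rate.

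For case (ii) the only change is that Theorem~\ref{theorem g unknown} adds the term $2\eta=B_1(\alpha+C_2 h^{\beta})/K^{2}$ and that the probability guarantee becomes $1-2e^{-\theta}$. With the prescribed $h\asymp n^{-1/(2\beta+1)}$ (so that $h^{\beta}\asymp n^{-\beta/(2\beta+1)}$) and $\alpha\asymp n^{-1/2}$ (from the two-sided bracket on $\alpha$ in the statement of Theorem~\ref{theorem g unknown}), and noting that $\beta/(2\beta+1)<1/2$ so the $h^\beta$ contribution dominates $\alpha$, the term $2\eta$ is of order $n^{-\beta/(2\beta+1)}$, up to constants that absorb the $\theta$--dependence through the chosen relation $\ln\alpha=\frac{2\beta+1}{2\beta}\theta+\frac{1}{2}\ln C_1-\frac{1}{2}\ln n+\frac{1}{2\beta}\ln(2\beta C_2)$. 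Choosing the same $\lambda_n$ as above, the bound splits into the two independent contributions $n^{-\gamma/((1+p)(2\gamma+1))}$ coming from the regularization/complexity trade-off and $n^{-\beta/(2\beta+1)}$ coming from the plug-in density estimator, whose maximum gives the $\min$-rate in (ii).

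The only nontrivial bookkeeping is making sure the $\theta$--dependent prefactors hidden in $B$, in the covering number log, and in $\eta$ remain constants of the form $c_\tau\in[1,\infty)$ once $\theta=-\log\tau$ is fixed, so that the result fits Definition~\ref{def learning rate}. The main obstacle I anticipate is the careful scaling of the covering number (to confirm that $\log N\lesssim a\lambda^{-p}\varepsilon^{-2p}$ leaks no hidden $n$--dependence) and checking that in case (ii) the $\theta$--dependence of $\alpha$ is truly absorbed into $c_\tau$ and does not spoil the $n^{-\beta/(2\beta+1)}$ order; the rest is routine calculus of exponents.
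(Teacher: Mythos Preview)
Your proposal is correct and follows essentially the same strategy as the paper: plug the covering-number and approximation-error hypotheses into the finite sample bounds of Theorems~\ref{ theorem 1 - g known} and~\ref{theorem g unknown}, optimize first in $\varepsilon$ and then in $\lambda$, and for case~(ii) observe that the additional density-estimation term $\eta$ is of order $n^{-\beta/(2\beta+1)}$ independently of $\lambda$. The paper carries out the same computation with explicit constants (their $M$, $N$, $Q$, and the specific choice $\varepsilon=(p/2)^{1/(1+p)}(2a/n)^{1/(2+2p)}\lambda^{-1/2}$) rather than your asymptotic balancing notation, but the content is identical.
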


		\subsubsection{\label{sub:Proof-of-Theorem 3}Proof of Theorem~\ref{theorem learning rate}}
	\begin{proof}
		\textbf{\large{}Case I}{\large \par}
		
		By Theorem~\ref{ theorem 1 - g known},
		
		\[
		\lambda\left\Vert f_{D,\lambda}\right\Vert _{\mathcal{H}}^{2}+R_{L,P}(f_{D,\lambda})-R_{L,P}^{*}-A_{2}(\lambda)\leq B\sqrt{\frac{2\log\left(2N(\lambda^{-\nicefrac{1}{2}}B_{H},\left\Vert \cdot\right\Vert _{\infty},\epsilon)\right)+2\theta}{n}}+\frac{2c_{l}\varepsilon}{\kappa}+4c_{L}\varepsilon
		\]
		with probability not less than $1-\exp(-\theta)$. For any compact set
		$\mathcal{S}=[-S,S]\subset\mathbb{R}$, Both $L$ and $l$ are bounded
		and Lipschitz continuous with Lipschitz constants $c_{L}\leq 2\tau^{-2}(S+\tau)$
		and $c_{l}=2\tau^{-2}$. Hence,
		
		\begin{equation}
		\begin{alignedat}{1} & \lambda\left\Vert f_{D,\lambda}\right\Vert _{\mathcal{H}}^{2}+R_{L,P}(f_{D,\lambda})-R_{L,P}^{*}-A_{2}(\lambda)\\
		\leq & B\sqrt{\frac{2\log\left(2N(B_{H},\left\Vert \cdot\right\Vert _{\infty},\sqrt{\lambda}\epsilon)\right)+2\theta}{n}}+\frac{2c_{l}\varepsilon}{\kappa}+4c_{L}\varepsilon\\
		\leq & B\sqrt{\frac{2\log\left(2N(B_{H},\left\Vert \cdot\right\Vert _{\infty},\sqrt{\lambda}\epsilon)\right)+2\theta}{n}}+\frac{4\varepsilon}{\kappa\tau^{2}}+\frac{8(S+\tau)}{\tau^{2}}\varepsilon\\
		= & B\sqrt{\frac{2\log\left(2N(B_{H},\left\Vert \cdot\right\Vert _{\infty},\sqrt{\lambda}\epsilon)\right)+2\theta}{n}}+M\cdot\epsilon
		\end{alignedat}
		\label{eq:M-1}
		\end{equation}

		where $M=4\tau^{-2}\left(\kappa^{-1}+2(S+\tau)\right)$.
		
		By the assumption $\log\left(N(B_{H},\left\Vert \cdot\right\Vert _{\infty},\epsilon)\right)\leq a\epsilon^{-2p}$, we have that:
		\[
		\begin{alignedat}{1} & \log\left(2N(B_{H},\left\Vert \cdot\right\Vert _{\infty},\sqrt{\lambda}\epsilon)\right)=\log(2)+\log\left(N(B_{H},\left\Vert \cdot\right\Vert _{\infty},\sqrt{\lambda}\epsilon)\right)\\
		\leq & \log(2)+a\left(\sqrt{\lambda}\epsilon\right)^{-2p}\leq2a\left(\sqrt{\lambda}\epsilon\right)^{-2p}.
		\end{alignedat}
		\]

		Choose $\epsilon=\left(\frac{p}{2}\right)^{\frac{1}{1+p}}\left(\frac{2a}{n}\right)^{\frac{1}{2+2p}}\frac{1}{\sqrt{\lambda}}$.
		Then
		\begin{equation}
		\begin{alignedat}{1} & a\left(\sqrt{\lambda}\epsilon\right)^{-2p}\\
		= & a\left(\left(\frac{p}{2}\right)^{\frac{1}{1+p}}\left(\frac{2a}{n}\right)^{\frac{1}{2+2p}}\right)^{-2p}.
		\end{alignedat}
		\label{eq:covering}
		\end{equation}

		By (\ref{eq:M-1}) and (\ref{eq:covering}),
		\begin{equation}
		\begin{alignedat}{1} & \lambda\left\Vert f_{D,\lambda}\right\Vert _{\mathcal{H}}^{2}+R_{L,P}(f_{D,\lambda})-R_{L,P}^{*}-A_{2}(\lambda)\\
		\leq & B\sqrt{\frac{4a\left(\left(\frac{p}{2}\right)^{\frac{1}{1+p}}\left(\frac{2a}{n}\right)^{\frac{1}{2+2p}}\right)^{-2p}+2\theta}{n}}+M\left(\frac{p}{2}\right)^{\frac{1}{1+p}}\left(\frac{2a}{n}\right)^{\frac{1}{2+2p}}\frac{1}{\sqrt{\lambda}}\\
		\leq & B\left(\sqrt{\frac{4a\left(\left(\frac{p}{2}\right)^{\frac{1}{1+p}}\left(\frac{2a}{n}\right)^{\frac{1}{2+2p}}\right)^{-2p}}{n}}+\sqrt{\frac{2\theta}{n}}\right)+\frac{M}{\sqrt{\lambda}}\left(\frac{p}{2}\right)^{\frac{1}{1+p}}\left(\frac{2a}{n}\right)^{\frac{1}{2+2p}}\\
		= & B\left(\frac{\sqrt{4a}\left(\left(\frac{p}{2}\right)^{\frac{-p}{1+p}}\left(\frac{2a}{n}\right)^{\frac{-p}{2+2p}}\right)}{\sqrt{n}}\right)+\frac{M}{\sqrt{\lambda}}\left(\frac{p}{2}\right)^{\frac{1}{1+p}}\left(\frac{2a}{n}\right)^{\frac{1}{2+2p}}+B\sqrt{\frac{2\theta}{n}}\\
		= & \left(\frac{p}{2}\right)^{\frac{-p}{1+p}}\left[B\sqrt{2}\left(\frac{2a}{n}\right)^{\frac{1}{2+2p}}+\frac{M}{\sqrt{\lambda}}\frac{p}{2}\left(\frac{2a}{n}\right)^{\frac{1}{2+2p}}+\right]+B\sqrt{\frac{2\theta}{n}}
		\end{alignedat}
		\label{eq:leraring rate1}
		\end{equation}

		Recall that $B_{2}=c_{L}\lambda^{-\nicefrac{1}{2}}+1$ and $B=B_{1}/2\kappa+B_{2}$,
		where $B_{1}$ is some bound on the derivative of the loss. Since
		$0<\lambda<1$, then $1<\lambda^{-\nicefrac{1}{2}}$, and therefor \[B_{2}\leq c_{L}\lambda^{-\nicefrac{1}{2}}+\lambda^{-\nicefrac{1}{2}}=\lambda^{-\nicefrac{1}{2}}(c_{L}+1)\leq\lambda^{-\nicefrac{1}{2}}\left(\frac{2(S+\tau)}{\tau^{2}}+1\right).
		\]
		Earlier we defined $M$ such that $\kappa=4/M\tau^{2}-8(S+\tau)$.
		Thus,
		\[
		B\leq\frac{B_{1}}{2\kappa}+\frac{1}{\sqrt{\lambda}}\left(\frac{2(S+\tau)+\tau^{2}}{\tau^{2}}\right)=\frac{B_{1}(M\tau^{2}-8(S+\tau))}{8}+\frac{1}{\sqrt{\lambda}}\left(\frac{2(S+\tau)+\tau^{2}}{\tau^{2}}\right)=
		\]
		\[
		=\frac{\sqrt{\lambda}B_{1}(M\tau^{2}-8(S+\tau))+8\left(\frac{2(S+\tau)+\tau^{2}}{\tau^{2}}\right)}{8\sqrt{\lambda}}\leq\frac{B_{1}(M\tau^{2})+8+16\left(\frac{S+\tau}{\tau^{2}}\right)}{8\sqrt{\lambda}}\equiv\frac{N}{\sqrt{\lambda}},
		\]
		where we define $N\equiv 8^{-1}\left(B_{1}M\tau^{2}+8+16\tau^{-2}\left(S+\tau\right)\right)$.
		
		Hence we can bound (\ref{eq:leraring rate1}) by
		
		\[
		\begin{alignedat}{1} & \left(\frac{p}{2}\right)^{\frac{-p}{1+p}}\left[\frac{\sqrt{2}N}{\sqrt{\lambda}}\left(\frac{2a}{n}\right)^{\frac{1}{2+2p}}+\frac{M}{\sqrt{\lambda}}\frac{p}{2}\left(\frac{2a}{n}\right)^{\frac{1}{2+2p}}\right]+\frac{N}{\sqrt{\lambda}}\sqrt{\frac{2\theta}{n}}\\
		\leq & \left(\frac{p}{2}\right)^{\frac{-p}{1+p}}\frac{N}{\sqrt{\lambda}}\left[\sqrt{2}\left(\frac{2a}{n}\right)^{\frac{1}{2+2p}}+\frac{Mp}{2N}\left(\frac{2a}{n}\right)^{\frac{1}{2+2p}}\right]+\frac{N}{\sqrt{\lambda}}\sqrt{\frac{2\theta}{n}}\\
		\leq & \left(\frac{p}{2}\right)^{\frac{-p}{1+p}}\frac{N}{\sqrt{\lambda}}\left[2\left(\frac{2a}{n}\right)^{\frac{1}{2+2p}}+\frac{Mp}{N}\left(\frac{2a}{n}\right)^{\frac{1}{2+2p}}\right]+\frac{N}{\sqrt{\lambda}}\sqrt{\frac{2\theta}{n}}
		\end{alignedat}
		\]

		Choose \[B_{1}\geq\frac{4}{\tau^{2}}-\left(2+4\left(\frac{S+\tau}{\tau^{2}}\right)\right)\left(\frac{1}{\kappa}+2S+2\tau\right)^{-1}.
		\]
		Note that \[M=\frac{4}{\tau^{2}}\left(\frac{1}{\kappa}+2(S+\tau)\right)\leq\frac{B_{1}(M\tau^{2})}{4}+2+4\left(\frac{S+\tau}{\tau^{2}}\right)=2N.
		\]
		Consequently, for our choice of $B_{1}$,
		we have that $M\leq2N$ or $M/2N\leq1$. Note also that $(p+1)(2/p)^{\nicefrac{p}{1+p}}\leq3,$
		hence:
		
		\[
		\begin{alignedat}{1}\left(\frac{p}{2}\right)^{\frac{-p}{1+p}}\frac{N}{\sqrt{\lambda}}\left(\frac{2a}{n}\right)^{\frac{1}{2+2p}}\left(2+\frac{M}{N}p\right)+\frac{N}{\sqrt{\lambda}}\sqrt{\frac{2\theta}{n}} & \leq\left(\frac{p}{2}\right)^{\frac{-p}{1+p}}\left(p+1\right)2\frac{N}{\sqrt{\lambda}}\left(\frac{2a}{n}\right)^{\frac{1}{2+2p}}+\frac{N}{\sqrt{\lambda}}\sqrt{\frac{2\theta}{n}}\\
		& \leq\frac{N}{\sqrt{\lambda}}\left[6\left(\frac{2a}{n}\right)^{\frac{1}{2+2p}}+\sqrt{\frac{2\theta}{n}}\right].
		\end{alignedat}
		\]

		Since $A_{2}(\lambda)\leq c\lambda^{\gamma}$ for constants $c>0,$
		and $\gamma\in(0,1]$,
		
		\begin{equation}
		\lambda\left\Vert f_{D,\lambda}\right\Vert _{\mathcal{H}}^{2}+R_{L,P}(f_{D,\lambda})-R_{L,P}^{*}\leq c\lambda^{\gamma}+\frac{N}{\sqrt{\lambda}}\left[6\left(\frac{2a}{n}\right)^{\frac{1}{2+2p}}+\sqrt{\frac{2\theta}{n}}\right]\label{eq:learning rate bound}
		\end{equation}

		We would like to choose a sequence $\lambda_{n}$ that will minimize
		the bound in (\ref{eq:learning rate bound}). 
		
		Define \[W(\lambda)=c\lambda^{\gamma}+\frac{N}{\sqrt{\lambda}}\left[6\left(\frac{2a}{n}\right)^{\frac{1}{2+2p}}+\sqrt{\frac{2\theta}{n}}\right].
		\]
		Differentiating $W$ with respect to $\lambda$ and setting to zero
		yields:
		
		\[
		\begin{alignedat}{1}\frac{dW(\lambda)}{d\lambda}= & c\gamma\lambda^{\gamma-1}-\frac{1}{2}N\lambda^{-\frac{3}{2}}\left[6\left(\frac{2a}{n}\right)^{\frac{1}{2+2p}}+\sqrt{\frac{2\theta}{n}}\right]=0\\
		\Leftrightarrow\\
		c\gamma\lambda^{\gamma-1}= & \frac{1}{2}N\lambda^{-\frac{3}{2}}\left[6\left(\frac{2a}{n}\right)^{\frac{1}{2+2p}}+\sqrt{\frac{2\theta}{n}}\right]\\
		\Leftrightarrow\lambda= & \left(\frac{1}{2c\gamma}N\left[6\left(\frac{2a}{n}\right)^{\frac{1}{2+2p}}+\sqrt{\frac{2\theta}{n}}\right]\right)^{\frac{1}{\gamma+\frac{1}{2}}}\propto\left(\frac{1}{n}^{\frac{1}{2+2p}}+\left(\frac{1}{n}\right)^{\frac{1}{2}}\right)^{\frac{2}{2\gamma+1}}\\
		\Rightarrow\lambda\propto & n^{-\frac{1}{(1+p)(2\gamma+1)}}
		\end{alignedat}
		\]

		Since the second derivative of $W$ (with respect to $\lambda)$ is
		positive, $\lambda$ is the minimizer. by (\ref{eq:learning rate bound}),
		
		\begin{equation}
		\begin{alignedat}{1} & Pr\left(R_{L,P}(f_{D,\lambda})-R_{L,P}^{*}\leq c\lambda^{\gamma}+\frac{N}{\sqrt{\lambda}}\left[6\left(\frac{2a}{n}\right)^{\frac{1}{2+2p}}+\sqrt{\frac{2\theta}{n}}\right]\right)\geq1-\exp(-\theta)\end{alignedat}
		.\label{eq:learning rate prob}
		\end{equation}

		By the choice of $\lambda_{n},$ the bound in equation (\ref{eq:learning rate prob})
		can be written as
		
		\[
		\begin{alignedat}{1} & cn^{-\frac{\gamma}{(1+p)(2\gamma+1)}}+Nn^{\frac{1}{2(1+p)(2\gamma+1)}}\left[6\left(2a\right)^{\frac{1}{2+2p}}n^{-\frac{1}{2+2p}}+\left(2\theta\right)^{\frac{1}{2}}n^{-\frac{1}{2}}\right]\\
		= & cn^{-\frac{\gamma}{(1+p)(2\gamma+1)}}+N\cdot6\left(2a\right)^{\frac{1}{2+2p}}n^{-\frac{\gamma}{(1+p)(2\gamma+1)}}+N\left(2\theta\right)^{\frac{1}{2}}n^{-\frac{2\gamma(1+p)+p}{2(1+p)(2\gamma+1)}}\\
		\leq & cn^{-\frac{\gamma}{(1+p)(2\gamma+1)}}+N\cdot6\left(2a\right)^{\frac{1}{2+2p}}n^{-\frac{\gamma}{(1+p)(2\gamma+1)}}+N\left(2\theta\right)^{\frac{1}{2}}n^{-\frac{\gamma}{(1+p)(2\gamma+1)}}\\
		= & n^{-\frac{\gamma}{(1+p)(2\gamma+1)}}\left(c+N\cdot6\left(2a\right)^{\frac{1}{2+2p}}+N\left(2\theta\right)^{\frac{1}{2}}\right)\\
		\leq & Q(1+\sqrt{\theta})n^{-\frac{\gamma}{(1+p)(2\gamma+1)}}
		\end{alignedat}
		\]

		where $Q$ is a constant that does not depend on $n$ or on $\theta$.
		
		In conclusion, by choosing a sequence $\lambda_{n}$ that behaves
		like $n^{-\nicefrac{1}{(1+p)(2\gamma+1)}}$, we have that the resulting
		learning rate is given by
		\[
		Pr\left(R_{L,P}(f_{D,\lambda})-R_{L,P}^{*}\leq Q(1+\sqrt{\theta})n^{-\frac{\gamma}{(1+p)(2\gamma+1)}}\right)\geq1-\exp(-\theta).
		\]

		\textbf{\large{}Case II}{\large \par}
		By Theorem~\ref{theorem g unknown},
		\[
		\lambda\left\Vert f_{D,\lambda}\right\Vert _{\mathcal{H}}^{2}+R_{L,P}(f_{D,\lambda})-R_{L,P}^{*}-A_{2}(\lambda)\geq B\sqrt{\frac{2\log\left(2N(\lambda^{-\nicefrac{1}{2}}B_{H},\left\Vert \cdot\right\Vert _{\infty},\epsilon)\right)+2\theta}{n}}+\frac{3c_{l}\varepsilon}{\kappa}+4c_{L}\varepsilon+2\eta
		\]
	with probability not greater than $2\exp(-\theta)$ and where \[\eta\equiv\frac{B_{1}}{2\kappa^{2}}\left(\sqrt{\frac{2D_{1}\theta}{n^{2}h}}+\frac{2g_{max}\theta}{3n}+D_{2}\cdot h^{\beta}\right).
	\]
		
		Choose \[\epsilon=\left(\frac{p}{2}\right)^{\frac{1}{1+p}}\left(\frac{2a}{n}\right)^{\frac{1}{2+2p}}\frac{1}{\sqrt{\lambda}},
		\]
		\[M=\frac{2}{\tau^{2}}\left(\frac{3}{\kappa}+4(S+\tau)\right),
		\] \[B_{1}\geq\frac{6}{\tau^{2}}-\left(6+12\left(\frac{S+\tau}{\tau^{2}}\right)\right)\left(\frac{3}{\kappa}+4S+4\tau\right)^{-1},
		\]
		and define $N=12^{-1}\left(B_{1}M\tau^{2}+12+24\tau^{-2}\left(S+\tau\right)\right)$
		, then as in (\ref{eq:learning rate bound}), a very similar calculation
		shows that
		\[
		\begin{alignedat}{1} & \lambda\left\Vert f_{D,\lambda}\right\Vert _{\mathcal{H}}^{2}+R_{L,P}(f_{D,\lambda})-R_{L,P}^{*}\leq c\lambda^{\gamma}+\frac{N}{\sqrt{\lambda}}\left[6\left(\frac{2a}{n}\right)^{\frac{1}{2+2p}}+\sqrt{\frac{2\theta}{n}}\right]+2\eta.\end{alignedat}
		\]
		
		We would like to choose the bandwidth $h$ that minimizes $\eta$. The minimum is achieved at $h^{*}$ where \[h^{*}=\left(\frac{D_1 \theta}{2D_2^2\beta^2n^2}\right)^{\nicefrac{1}{2\beta+1}}.
		\] 
		 Substituting this result into $\eta$ yields
		\[
		\eta=\frac{B_{1}}{2\kappa^{2}}\left(\sqrt{2D_1}\left(\frac{D_{1}}{{2D_{2}}^2\beta^2}\right)^{\frac{-1}{2(2\beta+1)}}n^{-\frac{2\beta}{2\beta+1}}\theta^{\frac{\beta}{2\beta+1}}+\frac{2g_{max}\theta}{3n}+D_{2}\left(\frac{D_{1}}{2D_{2}^2\beta^2}\right)^{\frac{\beta}{2\beta+1}}\theta^{\frac{\beta}{2\beta+1}}n^{-\frac{2\beta}{2\beta+1}}\right)
		\]
		or
		\[
		\eta=\tilde{D}n^{-\min\left(1,\frac{2\beta}{2\beta+1}\right)}\max\left(\theta,\theta^{\frac{\beta}{2\beta+1}}\right)=\tilde{D}n^{-\frac{2\beta}{2\beta+1}}\max\left(\theta,\theta^{\frac{\beta}{2\beta+1}}\right)
		\]
		Where $\tilde{D}$ is a constant that does not depend on $\theta$ or on $n$.
		
		Hence,
		
		\[
		\begin{alignedat}{1} & \lambda\left\Vert f_{D,\lambda}\right\Vert _{\mathcal{H}}^{2}+R_{L,P}(f_{D,\lambda})-R_{L,P}^{*}\leq c\lambda^{\gamma}+\frac{N}{\sqrt{\lambda}}\left[6\left(\frac{2a}{n}\right)^{\frac{1}{2+2p}}+\sqrt{\frac{2\theta}{n}}\right]+2\eta\\
		\leq & c\lambda^{\gamma}+\frac{N}{\sqrt{\lambda}}\left[6\left(\frac{2a}{n}\right)^{\frac{1}{2+2p}}+\sqrt{\frac{2\theta}{n}}\right]+\tilde{D}n^{-\frac{2\beta}{2\beta+1}}\max\left(\theta,\theta^{\frac{\beta}{2\beta+1}}\right)
		\end{alignedat}
		\]
		Similarly to Case I, choosing $\lambda_{n}\propto n^{-\frac{1}{(1+p)(2\gamma+1)}}$
		minimizes the last bound (note that the choice of $\lambda_{n}$ does
		not depend on $\eta$). Hence the resulting learning rate is
		given by
		
		\[
		Pr(D\in(\mathcal{Z\times\mathcal{Y}})^{n}\,:\,\mathcal{R}_{L,P}(f_{D,\lambda_{n}})-\mathcal{R}_{L,P}^{*}\leq Q \max\left(\theta,1+\sqrt{\theta}\right)n^{-min\left(\frac{\gamma}{(1+p)(2\gamma+1)},\frac{2\beta}{2\beta+1}\right)})\geq1-\exp(-\theta)
		\]

		where $Q$ is a constant that does not depend on $n$ or on $\theta$.
	\end{proof}
\bibliographystyle{spbasic}
\bibliography{KM_CSD_bib_author_initials}

\end{document}